\numberwithin{equation}{section}
\numberwithin{figure}{section}
\newtheorem{theorem}{Theorem}[section]
\newtheorem{lemma}[theorem]{Lemma}
\newtheorem*{notation}{Notation}
\newtheorem*{assumption a}{Assumption A}
\newtheorem*{assumption b}{Assumption B}
\newtheorem*{assumption c}{Assumption C}
\theoremstyle{definition}
\theoremstyle{remark}
\newtheorem{remark}[theorem]{Remark}
\DeclarePairedDelimiter{\norm}{\lVert}{\rVert}
\newcommand{\N}{\mathbb{N}}
\newcommand{\R}{\mathbb{R}}
\newcommand{\C}{\mathbb C}
\renewcommand{\leq}{\leqslant}
\renewcommand{\geq}{\geqslant}
\DeclareMathAlphabet{\mathpzc}{OT1}{pzc}{m}{it}
\renewcommand{\Re}{\mathcal R\!\mathpzc{e}}
\renewcommand{\Im}{\mathcal I\!\mathpzc{m}}
\begin{document}

\title[Multi-solitons]{Construction of the multi-solitons for a generalized derivative nonlinear Schr\"odinger equation}

\author[Phan Van Tin]{Phan Van Tin}

\address[Phan Van Tin]{Institut de Math\'ematiques de Toulouse ; UMR5219,
  \newline\indent
  Universit\'e de Toulouse ; CNRS,
  \newline\indent
  UPS IMT, F-31062 Toulouse Cedex 9,
  \newline\indent
  France}
\email[Phan Van Tin]{van-tin.phan@univ-tlse3.fr}

\subjclass[2020]{35Q55; 35C08; 35Q51}

\date{\today}
\keywords{Nonlinear derivative Schr\"odinger equations, Infinite soliton}

\begin{abstract} 
We consider a derivative nonlinear Schr\"odinger equation with general nonlinearlity:
\begin{equation*}
i\partial_tu+\partial_x^2u+i|u|^{2\sigma}\partial_xu=0,
\end{equation*}
In \cite{TaXu18}, the authors prove the stability of two solitary waves in energy space for $\sigma\in (1,2)$. As a consequence, there exists a solution of the above equation which is close arbitrary to sum of two solitons in energy space when $\sigma\in (1,2)$. Our goal in this paper is proving the existence of multi-solitons in energy space for $\sigma \geq \frac{5}{2}$. Our proofs proceed by fixed point arguments around the desired profile, using Strichartz estimates. 
\end{abstract}

\maketitle
\tableofcontents

\section{Introduction}
In this paper, we consider the following generalized derivative nonlinear Schr\"odinger equation:
\begin{equation}\label{gdnls} 
i\partial_tu+\partial_x^2u+i|u|^{2\sigma}\partial_xu=0,
\end{equation}
where $\sigma\in\R^{+}$ is a given constant and $u:\mathbb R_t\times \mathbb R_x\to \mathbb C$. 

The equation \eqref{gdnls} was studied in many works. In the special case $\sigma=1$, local well-posedness, global well posedness, stability of solitary waves and stability of multi-solitons have been investigated. In \cite{Oz96}, Ozawa gave a sufficient condition for global well posedness of \eqref{gdnls} in the energy space by using a Gauge transformation to remove the derivative terms. In \cite{CoOh06}, Colin-Ohta showed that the equation has a two parameters family of solitary waves and proved the stability of these particular solutions by using variational methods. In \cite{KwWu18}, Kwon-Wu gave a result on stability of solitary waves when the parameters are at the threshold between existence and non-existence. In \cite{CoWu18}, Le Coz-Wu proved stability of multi-solitons in the energy space under some conditions on the parameters of the composing solitons.\\ 
In the general case, the local well-posedness and global well- posedness of \eqref{gdnls} was studied in \cite{HaOz16} when the initial data is in the Sobolev space $H_0^1(\Omega)$, where $\Omega$ is any unbounded interval of $\R$. In this work, Hayashi-Ozawa used an approximation argument. In \cite{Santos15}, Santos proved the local well-posedness for small size initial data in weighted Sobolev spaces. The arguments used in this work follow parabolic regularization approach introduced by Kato \cite{Ka72}. %Recently, Ben and Mitchell \cite{BeMi21} established global well posedness of \eqref{gdnls} in the low Sobolev spaces $H^s(\R)$ when $s\in [1,4\sigma)$ and $\sigma\in\left(\frac{\sqrt{3}}{2},1\right)$. 

%%In this chapter, we establish the local well posedness of \eqref{gdnls} by a new approach. More precisely, we prove the following local result. 

%%\begin{theorem}\label{Thm 10}
%Let $\sigma\geq 1$ and $u_0 \in H^1(\R)$. Then there exists a unique maximal solution $u \in C((T_1,T_2),H^1(\R))$ of \eqref{gdnls}. Moreover, $u$ satisfies the blow up alternative property and depends continuously on th initial data.  
%\end{theorem}
%Our strategy of the proof of the above theorem is to use a Gauge transformation. By using this transformation, we obtain a system of two equations without derivative nonlinearities. Local theory of this system is established by a standard arguments. Later, we prove a relation between the solutions of this system which implies the existence of solutions of \eqref{gdnls}. The uniqueness and the other properties of solution of \eqref{gdnls} are obtained from the associated properties of the solutions of the system.  

The equation \eqref{gdnls} has a two parameters family of solitons. The stability of the solitons has attracted the attention of many researchers. In \cite{LiSiSu13}, by using the abstract theory of Grillakis-Shatah-Strauss \cite{GrShSt87,GrShSt90}, Liu-Simpson-Sulem proved that in the case $\sigma\geq 2$, the solitons of \eqref{gdnls} are orbitally unstable; in the case $0<\sigma<1$, they are orbitally stable and in the case $\sigma \in (1,2)$ they are orbitally stable if $c<2z_0\sqrt{\omega}$ and orbitally unstable if $c>2z_0\sqrt{\omega}$ for some constant $z_0 \in (0,1)$. In the critical case $c=2z_0\sqrt{\omega}$, Guo-Ning-Wu \cite{GuNiWu20} proved that solitons are always orbitally unstable. In \cite{BaWuXu20}, Bai-Wu-Xue proved that when $\sigma\geq 2$, the solution is global and scattering when the initial data small in $H^s(\R)$, $\frac{1}{2}\leq s\leq 1$. Moreover, the authors showed that when $\sigma <2$, the scattering may not occur even under smallness conditions on the initial data. Therefore, in this model, the exponent $\sigma\geq 2$ is optimal for small data scattering. In \cite{TaXu18}, in the case $\sigma\in (1,2)$, Tang and Xu proved the stability of the sum of two solitary waves in the energy space using perturbation arguments, modulational analysis and an energy argument as in \cite{MaMeTs02,MaMeTs06}. In this chapter, we show the existence of multi-solitons in energy space in the case $\sigma \geq \frac{5}{2}$. Before stating the main result, we give some preliminaries on multi-solitons of \eqref{gdnls}. 
 
As mentioned in \cite{LiSiSu13}, the equation \eqref{gdnls} admits a two-parameters family of solitary waves solutions given by
\begin{equation}
\label{formula of soliton}
\psi_{\omega,c}(t,x)=\varphi_{\omega,c}(x-ct)\exp \left( i\left(\omega t+\frac{c}{2}(x-ct)-\frac{1}{2\sigma+2}\int_{-\infty}^{x-ct}\varphi_{\omega,c}^{2\sigma}(\eta)\,d\eta\right)\right),
\end{equation}
where $\omega>\frac{c^2}{4}$ and
\begin{equation}
\label{formula of varphi}
\varphi_{\omega,c}^{2\sigma}(y)=\frac{(\sigma+1)(4\omega-c^2)}{2\sqrt{\omega}\left(\cosh(\sigma\sqrt{4\omega-c^2}y)-\frac{c}{2\sqrt{\omega}}\right)}.
\end{equation}
The profile $\varphi_{\omega,c}$ is a positive solution of 
\begin{equation}
\label{equation of varphi}
-\partial_y^2\varphi_{\omega,c}+\left(\omega-\frac{c^2}{4}\right)\varphi_{\omega,c}+\frac{c}{2}|\varphi_{\omega,c}|^{2\sigma}\varphi_{\omega,c}-\frac{2\sigma+1}{(2\sigma+2)^2}|\varphi_{\omega,c}|^{4\sigma}\varphi_{\omega,c}=0.
\end{equation}
Define
\begin{equation}
\label{define of phi}
\phi_{\omega,c}(y)=\varphi_{\omega,c}(y)e^{i\theta_{\omega,c}(y)},
\end{equation}
where
\begin{equation}
\label{define of theta}
\theta_{\omega,c}(y)=\frac{c}{2}y-\frac{1}{2\sigma+2}\int_{-\infty}^y\varphi_{\omega,c}^{2\sigma}(\eta)\, d\eta.
\end{equation}
Clearly, we have
\begin{equation}
\label{rewrite of psi}
\psi_{\omega,c}(x,t)=e^{i\omega t}\phi_{\omega,c}(x-ct).
\end{equation}
and $\phi_{\omega,c}$ solves
\begin{equation}
\label{equation of phi}
-\partial_y^2\phi_{\omega,c}+\omega\phi_{\omega,c}+ic\partial_y\phi_{\omega,c}-i|\phi_{\omega,c}|^{2\sigma}\partial_y\phi_{\omega,c}=0, \quad y\in\R.
\end{equation}
Let $K\in\N$. For each $1 \leq j \leq K$, let $(\omega_j,c_j,x_j,\theta_j) \in \R^4$ be parameters such that $\omega_j>\frac{c_j^2}{4}$. Define, for each $j=1,...,K$
\[
R_j(t,x)=e^{i\theta_j}\psi_{\omega_j,c_j}(t,x-x_j)
\]
and define the multi-soliton profile by
\begin{equation}
\label{profile of multi soliton}
R=\sum_{j=1}^{K}R_j.
\end{equation}
For convenience, define $h_j=\sqrt{4\omega_j-c_j^2}$, for each $j=1,...,K$. Our main result is the following.
\begin{theorem}
\label{main result}
Let $\sigma\geq \frac{5}{2}$, $K\in \N^{*}$ and for each $1\leq j\leq K$, $(\theta_j,\omega_j,c_j,x_j)$ be a sequence of parameters such that $x_j\in\R$, $\theta_j\in\R$, $c_j\neq c_k$, for $j\neq k$. The multi-soliton profile $R$ is given as in \eqref{profile of multi soliton}. There exists a certain positive constant $C_*$ such that if the parameters $(\omega_j,c_j)$ satisfy
\begin{equation}
\label{condition of existence multi soliton}
C_*\left((1+\norm{R}_{L^{\infty}L^{\infty}}^{2(\sigma-1)})(1+\norm{R}_{L^{\infty}H^1}^2)(1+\norm{\partial_xR}_{L^{\infty}L^{\infty}}+\norm{R}_{L^{\infty}L^{\infty}}^{2\sigma+1})\right) \leq v_{*}=\inf_{j\neq k}h_j|c_j-c_k|,
\end{equation}
then there exists a solution $u$ of \eqref{gdnls} such that
\[
\norm{u-R}_{H^1} \leq Ce^{-\lambda t}, \quad \forall t\geq T_0,
\]
for positive constants $C,T_0$ depending only on the parameters $\omega_1,...,\omega_K,c_1,...,c_K$ and $\lambda=\frac{1}{16}v_{*}$.
\end{theorem}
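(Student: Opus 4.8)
The plan is to seek the solution in the form $u=R+w$ and to obtain the correction $w$ as the fixed point of a Duhamel operator integrated backwards from $t=+\infty$, so that the exponential decay is encoded in the function space itself. First I would compute the source term created by the profile, $E:=i\partial_tR+\partial_x^2R+i|R|^{2\sigma}\partial_xR$. Because each $R_j$ is an exact solution of \eqref{gdnls} (the phase $e^{i\theta_j}$ and the translation by $x_j$ preserve the equation by gauge and translation invariance), the linear and time-derivative contributions cancel in the sum and one is left with the purely nonlinear interaction $E=i\big(|R|^{2\sigma}\partial_xR-\sum_{j=1}^K|R_j|^{2\sigma}\partial_xR_j\big)$. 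The profiles decay exponentially with rate $h_j/2$ by \eqref{formula of varphi}, while the centres $x_j+c_jt$ separate with distinct slopes $c_j\neq c_k$; estimating the overlap of two distinct solitons in the relevant space--time norm then gives a bound $\lesssim e^{-\beta t}$ with $\beta$ proportional to $v_*=\inf_{j\neq k}h_j|c_j-c_k|$ and, crucially, $\beta$ strictly larger than the target rate $\lambda=v_*/16$. This is the step that fixes $\lambda$.

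Writing $N(w):=|R+w|^{2\sigma}\partial_x(R+w)-|R|^{2\sigma}\partial_xR$, the correction solves $i\partial_tw+\partial_x^2w=-iN(w)-E$, which I would recast as the fixed-point equation $w=\Phi(w)$ with $\Phi(w)(t)=i\int_t^{+\infty}e^{i(t-s)\partial_x^2}\big(-iN(w)(s)-E(s)\big)\,ds$. The natural setting is a complete space $X$ of functions on $[T_0,\infty)$ weighted by $e^{\lambda t}$, combining the energy norm $L^\infty_tH^1$ with space--time Strichartz (and smoothing) norms, and I would run the contraction on a ball of radius a fixed multiple of the error bound. Applying the inhomogeneous estimates to $\Phi$, the $E$-contribution is handled by the previous paragraph, while $N(w)$ is split into a part linear in $w$ with $R$-dependent coefficients, which produces exactly the factors $\|R\|_{L^\infty L^\infty}^{2(\sigma-1)}$, $\|R\|_{L^\infty H^1}^2$, $\|\partial_xR\|_{L^\infty L^\infty}$ and $\|R\|_{L^\infty L^\infty}^{2\sigma+1}$ of \eqref{condition of existence multi soliton}, and genuinely higher-order parts that are small on the ball. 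Integrating in time against the weight yields a gain of order $1/v_*$, so the linear-in-$w$ part has operator norm $\lesssim C_*(\cdots)/v_*\leq 1$ precisely under \eqref{condition of existence multi soliton}; for a suitable choice of the absolute constant $C_*$ this is a genuine contraction, Banach's theorem produces $w\in X$, and $u=R+w$ then satisfies $\|u-R\|_{H^1}=\|w\|_{H^1}\lesssim e^{-\lambda t}$ for $t\geq T_0$.

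The main obstacle, and the reason for the restriction $\sigma\geq\tfrac52$, is the derivative inside the nonlinearity: the term $|R+w|^{2\sigma}\partial_x(R+w)$ already carries one spatial derivative, so a naive $H^1$ bound on $N(w)$ through Duhamel would require controlling $\partial_x^2w$, which is unavailable in the energy space. I would circumvent this by never differentiating the nonlinearity a second time: the single derivative is kept on $w$ and measured in a norm adapted to the group (via the Kato smoothing and maximal-function refinements of the Strichartz estimates, e.g.\ $\partial_xw$ in $L^\infty_xL^2_t$), while the coefficients $|R+w|^{2\sigma}$ and their derivatives are placed in the complementary mixed norms. The high power $\sigma\geq\tfrac52$ is what provides the H\"older room to distribute the $2\sigma$ factors together with the one derivative across these norms and still close the multilinear estimates; for smaller $\sigma$ the nonlinear terms are too singular for this purely perturbative scheme. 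Carrying out these multilinear estimates with the precise constants that assemble into \eqref{condition of existence multi soliton}, uniformly in $t$ against the exponential weight, is where the bulk of the work lies.
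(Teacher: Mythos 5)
Your overall skeleton (exactness of each $R_j$, interaction error $E$ decaying like $e^{-\beta t}$ with $\beta$ a multiple of $v_*$ exceeding $\lambda=v_*/16$, backward-from-infinity Duhamel contraction in an exponentially weighted space) is sound and matches the general strategy, but your treatment of the derivative loss --- which is the one genuine difficulty of this problem --- is a gap, not a proof. You propose to close the contraction for $u=R+w$ directly, putting $\partial_x w$ in smoothing/maximal norms such as $L^\infty_xL^2_t$ and the coefficients in ``complementary mixed norms.'' Concretely, the smoothing estimate that gains back the derivative requires the forcing in a dual norm of the type $L^1_xL^2_t$, so the linear-in-$w$ term $|R|^{2\sigma}\partial_xw$ must be split by H\"older as $\norm{|R|^{2\sigma}\partial_xw}_{L^1_xL^2_t}\leq \norm{|R|^{2\sigma}}_{L^1_xL^\infty_t}\norm{\partial_xw}_{L^\infty_xL^2_t}$, and \emph{any} such split places some $L^p_xL^\infty_t$ norm, $p<\infty$, on the background. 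These norms are infinite on $[T_0,\infty)$: since each soliton moves with speed $c_j\neq 0$, one has $\sup_{t\geq T_0}|R_j(t,x)|=\norm{\varphi_{\omega_j,c_j}}_{L^\infty}$ for every $x$ in the half-line swept by that soliton, so $\sup_t|R|^{2\sigma}$ is not integrable in $x$. The time-global smoothing framework therefore degenerates around a moving, non-decaying-in-time background; to rescue it you would need to localize to unit time windows, re-sum against the exponential weight, control the resulting per-window constants (which carry extra factors of $|c_j|$ from the sweeping), and then reconcile those constants with the specific quantities appearing in \eqref{condition of existence multi soliton}. None of this is addressed, and your explanation of the restriction $\sigma\geq\frac52$ as ``H\"older room'' is not attached to any concrete estimate.

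For contrast, the paper avoids the derivative loss altogether rather than absorbing it: it gauge-transforms $u$ into the pair $\varphi=e^{i\Lambda}u$, $\psi=e^{i\Lambda}\partial_xu$, which solves the system \eqref{system equation} whose nonlinearities contain \emph{no} derivatives, and runs an ordinary Strichartz contraction for that system; there $\sigma\geq\frac52$ enters for a precise reason, namely the Lipschitz-type inequality \eqref{eq 11111} needed for the power $|\varphi|^{2(\sigma-2)}$ inside the nonlocal terms. The price of the gauge trick is a step your scheme has no analogue of: one must prove a posteriori, by a Gr\"onwall argument, that the constraint $\psi=\partial_x\varphi-\frac{i}{2}|\varphi|^{2\sigma}\varphi$ actually holds for the fixed point, and the hypothesis \eqref{condition of existence multi soliton} is used exactly there, to force the Gr\"onwall exponent below $\lambda=v_*/16$. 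So in the paper the structural hypothesis certifies a compatibility relation, whereas in your proposal it is supposed to render a derivative-losing linear operator contractive --- and that second mechanism is precisely the part you have not substantiated.
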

We have the following comment about the restriction $\sigma\geq\frac{5}{2}$.
\begin{remark}
The following inequality holds for $\sigma\geq 2$:
\begin{equation}\label{eq 11111}
(a+b)^{2(\sigma-2)}-a^{2(\sigma-2)} \lesssim b^{2(\sigma-2)}+ba^{2(\sigma-2)-1}, \quad \text{ for all $a,b>0$. }
\end{equation}
The condition $\sigma\geq \frac{5}{2}$ ensures that the order of $b$ on the right hand side of \eqref{eq 11111} is larger than $1$. This is used in the proof of Lemma \ref{existence slution of system}. 
\end{remark}
The condition \eqref{condition of existence multi soliton} is an implicit condition on the parameters. Below, we show that for large, negative and enough separated velocities, the condition \eqref{condition of existence multi soliton} holds.  
\begin{remark}
We prove that there exist parameters $(\omega_j,c_j,\theta_j,x_j)$ for $1 \leq j\leq K$ such  at the condition \eqref{condition of existence multi soliton} is satisfied. Let $M>0$, $h_j>0$, $d_j<0$, for each $1\leq j\leq K$. We chose $(c_j,\omega_j)=\left(Md_j,\frac{1}{4}(h_j^2+M^2d_j^2)\right)$. We verify that this choice satisfies the condition \eqref{condition of existence multi soliton} for $M$ large enough. Indeed, we see that $c_j<0$ and $h_j \ll |c_j|$ for $M$ large enough. We have
\begin{align*}
\varphi^{2\sigma}_{\omega_j,c_j}&\approx \frac{h_j^2}{2\sqrt{\omega_j}\left(\cosh(\sigma h_j y)-\frac{c_j}{2\sqrt{\omega_j}}\right)}\\
\partial_x\varphi_{\omega_j,c_j}&\approx \left(\frac{h_j^2}{2\sqrt{\omega_j}}\right)^{\frac{1}{2\sigma}}\frac{-\sinh(\sigma h_j y)}{\left(\cosh(\sigma h_j y)-\frac{c_j}{2\sqrt{\omega_j}}\right)^{1+\frac{1}{2\sigma}}}.
\end{align*}
Using $|\sinh(x)| \leq |\cosh(x)|$ for all $x\in\R$ we have
\begin{align*}
|\partial_x\varphi_{\omega_j,c_j}| \leq \left(\frac{h_j^2}{2\sqrt{\omega_j}}\right)^{\frac{1}{2\sigma}}\frac{1}{(\cosh(\sigma h_j y)-\frac{c_j}{2\sqrt{\omega_j}})^{\frac{1}{2\sigma}}}\lesssim |\varphi_{\omega_j,c_j}|.
\end{align*}
Thus,
\begin{align*}
\norm{R_j}_{L^{\infty}L^{\infty}}&=\norm{\varphi_{\omega_j,c_j}}_{L^{\infty}}\lesssim \sqrt[2\sigma]{\frac{h_j^2}{|c_j|}} \ll 1\\
\norm{\partial_xR_j}_{L^{\infty}L^{\infty}}&=\norm{\partial_x\phi_{\omega_j,c_j}}_{L^{\infty}L^{\infty}}\\
&\lesssim \norm{\partial_x\varphi_{\omega_j,c_j}}_{L^{\infty}}+\left\lVert\frac{c_j}{2}\varphi_{\omega_j,c_j}-\frac{1}{2\sigma+2}\varphi_{\omega_j,c_j}^{2\sigma+1}\right\rVert_{L^{\infty}}\\
&\lesssim \norm{\varphi_{\omega_j,c_j}}_{L^{\infty}}+|c_j|\norm{\varphi_{\omega_j,c_j}}_{L^{\infty}}\\
&\lesssim \sqrt[2\sigma]{\frac{h_j^2}{|c_j|}}+|c_j|\sqrt[2\sigma]{\frac{h_j^2}{|c_j|}}.
\end{align*}   
Hence,
\begin{align*}
\norm{R}_{L^{\infty}L^{\infty}}&\lesssim \sum_{j}\sqrt[2\sigma]{\frac{h_j^2}{|c_j|}} \lesssim 1\\
\norm{\partial_xR}_{L^{\infty}L^{\infty}}&\lesssim \sum_j \left(\sqrt[2\sigma]{\frac{h_j^2}{|c_j|}}+|c_j|\sqrt[2\sigma]{\frac{h_j^2}{|c_j|}}\right).
\end{align*}
Furthermore, 
\begin{align*}
\norm{R_j}_{L^{\infty}H^1}^2&=\norm{R_j}_{L^{\infty}L^2}^2+\norm{\partial_xR_j}^2_{L^{\infty}L^2}=\norm{\varphi_{\omega_j,c_j}}_{L^2}^2+\norm{\partial_x\varphi_{\omega_j,c_j}}_{L^2}^2\\
&\lesssim  \norm{\varphi_{\omega_j,c_j}}_{L^2}^2 \lesssim \left(\frac{h_j^2}{2\sqrt{\omega_j}}\right)^{\frac{1}{\sigma}}\left\lVert\frac{1}{\cosh(\sigma h_j y)^{\frac{1}{2\sigma}}}\right\rVert^2_{L^2}\lesssim \left(\frac{h_j^2}{2\sqrt{\omega_j}}\right)^{\frac{1}{\sigma}}\norm{e^{-\frac{h_j}{2}|y|}}^2_{L^2}\\
&\approx \left(\frac{h_j^2}{2\sqrt{\omega_j}}\right)^{\frac{1}{\sigma}} \frac{1}{h_j}\lesssim h_j^{\frac{1}{\sigma}}h_j^{-1}=h_j^{\frac{1}{\sigma}-1},
\end{align*}
where we use $h_j \leq 2\sqrt{\omega_j}$. Thus,
\begin{align*}
\norm{R}_{L^{\infty}H^1}^2&\lesssim \sum_jh_j^{\frac{1}{\sigma}-1}.
\end{align*}
The condition \eqref{condition of existence multi soliton} satisfies if the following estimate holds:
\begin{align}\label{estimate final of final}
C_*\left(\left(1+\sum_jh_j^{\frac{1}{\sigma}-1}\right)\left(1+\sum_j \left(\sqrt[2\sigma]{\frac{h_j^2}{|c_j|}}+|c_j|\sqrt[2\sigma]{\frac{h_j^2}{|c_j|}}\right)\right)\right) \leq \inf_{j\neq k}h_j|c_j-c_k|.
\end{align}
We see that the left hand side of \eqref{estimate final of final} is order $M^{1-\frac{1}{2\sigma}}$ and the right hand side of \eqref{estimate final of final} is order $M^1$. Hence, the condition \eqref{condition of existence multi soliton} satisfies if we choose $M$ large enough.  
\end{remark}

Our strategy of the proof of Theorem \ref{main result} is as follows. First, we define $\varphi,\psi$ based on $u$ in such a way that $\varphi$ and $\psi$ satisfy a system of nonlinear Schr\"odinger equations without derivatives (see \eqref{system equation}). Let $R$ be a multi-soliton profile which satisfies the assumptions of Theorem \ref{main result}. Then $R$ solves \eqref{gdnls} up to a small perturbation. Let $(h,k)$ be defined in a similar way as $(\varphi,\psi)$ but replace $u$ by $R$. We see that $(h,k)$ solves \eqref{system equation} up to small perturbations. Setting $\tilde{\varphi}=\varphi-h$ and $\tilde{\psi}=\psi-k$, we see that if $u$ solves \eqref{gdnls} then $(\tilde{\varphi},\tilde{\psi})$ solves a system and a relation between $\tilde{\varphi}$ and $\tilde{\psi}$ holds and vice versa. By using the Banach fixed point theorem, we prove that there exists a  solution $(\tilde{\varphi},\tilde{\psi})$ of this system which exponential decays in time on $H^1(\R)$ for $t$ large. Combining with the assumption \eqref{condition of existence multi soliton}, we can prove a relation between $\tilde{\varphi}$ and $\tilde{\psi}$. Thus, we easily obtain the solution $u$ of \eqref{gdnls} satisfying the desired property.  

This chapter is organized as follows. In Section \ref{section 51}, we prove the existence of multi-solitons for the equation \eqref{gdnls}. In Section \ref{section52}, we prove some technical results which are used in the proof of the main result Theorem \ref{main result}. More precisely, we prove the exponential decay of perturbations in the equations of $h,k$ (Lemma \ref{estimate on profile}) and the existence of decaying solutions for the system of equations of $\tilde{\varphi},\tilde{\psi}$ (Lemma \ref{existence slution of system}).   

Before proving the main result, we introduce some notation used in this chapter.
\begin{notation}\label{notation of the paper}
\item[(1)] We denote the Schr\"odinger operator as follows
\[
L=i\partial_t+\partial^2_x.
\]
\item[(2)] Given a time $t\in\R$, the Strichartz space $S([t,\infty))$ is defined via the norm
\[
\norm{u}_{S([t,\infty))}=\sup_{(q,r) \text{ admissible }}\norm{u}_{L^q_t L^r_x([t,\infty)\times\R)}.
\] 
We denote the dual space by $N[t,\infty)=S([t,\infty))^{*}$. Hence for any $(q,r)$ admissible pair we have
\[
\norm{u}_{N([t,\infty))}\leq \norm{u}_{L^{q'}_tL^{r'}_x([t,\infty)\times\R)}.
\]
\item[(3)] For $a,b \in \R^2$, we denote $|(a,b)|=|a|+|b|$.
\item[(4)] Let $a,b>0$. We denote $a\lesssim b$ if $a$ is smaller than $b$ up to multiplication by a positive constant and denote $a \lesssim_c b$ if $a$ is smaller than $b$ up to multiplication by a positive constant depending on $c$. Moreover, we denote $a \approx b$ if $a$ equals to $b$ up to multiplication by a positive constant.  
\end{notation}

\section{Proof of the main result}\label{section 51}
In this section we give the proof of Theorem \ref{main result}. We use the Banach fixed point theorem and Strichartz estimates. We divide our proof in three steps.\\
\textbf{Step 1. Preliminary analysis.}
Let $u \in C(I,H^1(\R))$ be a $H^1(\R)$ solution of \eqref{gdnls} on $I$. Consider the following transform:
\begin{align}
\varphi(t,x)&=\exp(i\Lambda)u(t,x), \label{eqof varphi}\\
\psi&= \exp(i\Lambda)\partial_xu=\partial_x\varphi-\frac{i}{2}|\varphi|^{2\sigma}\varphi,\label{eqof psi}
\end{align} 
where 
\[
\Lambda=\frac{1}{2}\int_{-\infty}^x|u(t,y)|^{2\sigma}\,dy.
\]
As in \cite[section 4]{HaOz16}, using $|u|=|\varphi|$ and  $\Im(\overline{u}\partial_xu)=\Im(\overline{\varphi}\psi)$, we have
\begin{align*}
\partial_t\Lambda&= -\sigma\Im(|u|^{2(\sigma-1)}\overline{u}\partial_xu)+\sigma\Im\left[\int_{-\infty}^x\partial_x(|u|^{2(\sigma-1)}\overline{u})\partial_xu\,dy\right]-\frac{1}{4}|u|^{4\sigma}.
\end{align*}
Thus, using $|u|=|\varphi|$ and  $\Im(\overline{u}\partial_xu)=\Im(\overline{\varphi}\psi)$, we have
\begin{align*}
\partial_t\Lambda&=-\sigma|\varphi|^{2(\sigma-1)}\Im(\overline{\varphi}\psi)+\sigma\int_{-\infty}^x\partial_x(|u|^{2(\sigma-1)})\Im(\overline{u}\partial_xu)\,dx-\frac{1}{4}|\varphi|^{4\sigma}\\
&=-\sigma|\varphi|^{2(\sigma-1)}\Im(\overline{\varphi}\psi)+\sigma\int_{-\infty}^x\partial_x(|\varphi|^{2(\sigma-1)})\Im(\overline{\varphi}\psi)\,dx-\frac{1}{4}|\varphi|^{4\sigma}.
\end{align*} 
Since $u$ solves \eqref{gdnls}, we have
\begin{align*}
L\varphi&= L(\exp(i\Lambda))u+\exp(i\Lambda)Lu+2\partial_x(\exp(i\Lambda))\partial_xu\\
&=L(\exp(i\Lambda))u+\exp(i\Lambda)(Lu+i|u|^{2\sigma}u)\\
&=L(\exp(i\Lambda))u\\
&=(i\partial_t+\partial_x^2)(\exp(i\Lambda))u,\\
&=\left[-\exp(i\Lambda)\partial_t\Lambda +\partial_x(\exp(i\Lambda)\frac{i}{2}|u|^{2\sigma})\right]u\\
&=-\varphi\partial_t\Lambda+\left[\exp(i\Lambda)\frac{-1}{4}|u|^{2\sigma}+\frac{i}{2}\exp(i\Lambda)\partial_x(|u|^{2\sigma})\right]u\\
&=-\varphi\partial_t\Lambda+\varphi\left[-\frac{1}{4}|\varphi|^{4\sigma}+\frac{i}{2}\partial_x(|\varphi|^{2\sigma})\right]\\
&=\sigma|\varphi|^{2(\sigma-1)}\varphi\Im(\overline{\varphi}\psi)-\sigma\varphi\int_{-\infty}^x\partial_x(|\varphi|^{2(\sigma-1)})\Im(\overline{\varphi}\psi)\,dx\\
&\quad+\frac{1}{4}|\varphi|^{4\sigma}\varphi-\frac{1}{4}\varphi|\varphi|^{4\sigma}+i\sigma|\varphi|^{2(\sigma-1)}\varphi\Re(\overline{\varphi}\partial_x\varphi)\\
&=\sigma|\varphi|^{2(\sigma-1)}\varphi(\Im(\overline{\varphi}\psi)+i\Re(\overline{\varphi}\partial_x\varphi))\\
&\quad -\sigma\varphi\int_{-\infty}^x|\varphi|^{2(\sigma-2)}(\sigma-1)\partial_x(|\varphi|^2)\Im(\overline{\varphi}\psi)\,dx\\
&=\sigma|\varphi|^{2(\sigma-1)}\varphi(\Im(\overline{\varphi}\psi)+i\Re(\overline{\varphi}\psi))\\
&\quad-\sigma(\sigma-1)\varphi\int_{-\infty}^x|\varphi|^{2(\sigma-2)}2\Re(\overline{\varphi}\psi)\Im(\overline{\varphi}\psi)\,dx\\
&=i\sigma|\varphi|^{2(\sigma-1)}\varphi^2\overline{\psi}-\sigma(\sigma-1)\varphi\int_{-\infty}^x|\varphi|^{2(\sigma-2)}\Im(\psi^2\overline{\varphi}^2)\,dy.
\end{align*}
As in \cite[section 4]{HaOz16}, we have
\begin{align*}
L\psi&=L(\exp(i\Lambda)\partial_xu)\\
&=\exp(i\Lambda)\left[-\frac{i}{2}\partial_x(|u|^{2\sigma})\partial_xu+\sigma|u|^{2(\sigma-1)}\Im(\overline{u}\partial_xu)\partial_xu\right.\\
&\quad \left.-\sigma\int_{-\infty}^x\Im(\partial_x(|u|^{2(\sigma-1)}\overline{u})\partial_xu)\,dy\partial_xu\right]\\
&=-\frac{i}{2}\partial_x(|\varphi|^{2\sigma})\psi+\sigma|\varphi|^{2(\sigma-1)}\Im(\overline{\varphi}\psi)\psi-\sigma\int_{-\infty}^x\partial_x(|u|^{2(\sigma-1)}) \Im(\overline{u}\partial_xu)\,dy\psi\\
&=-\frac{i}{2}\partial_x(|\varphi|^{2\sigma})\psi+\sigma|\varphi|^{2(\sigma-1)}\psi\Im(\overline{\varphi}\psi)-\sigma\psi\int_{-\infty}^x\partial_x(|\varphi|^{2(\sigma-1)})\Im(\overline{\varphi}\psi)\,dy\\
&=\sigma|\varphi|^{2(\sigma-1)}\psi(\Im(\overline{\varphi}\psi)-i\Re(\overline{\varphi}\partial_x\varphi))\\
&\quad -\sigma\psi\int_{-\infty}^x(\sigma-1)|\varphi|^{2(\sigma-1)}2\Re(\overline{\varphi}\partial\varphi)\Im(\overline{\varphi}\psi)\,dy\\
&=\sigma|\varphi|^{2(\sigma-1)}\psi(\Im(\overline{\varphi}\psi)-i\Re(\overline{\varphi}\psi))\\
&\quad-\sigma(\sigma-1)\psi\int_{-\infty}^x|\varphi|^{2(\sigma-2)}2\Re(\overline{\varphi}\psi)\Im(\overline{\varphi}\psi)\Im(\overline{\varphi}\psi)\,dy\\
&=-i\sigma |\varphi|^{2(\sigma-1)}\psi^2\overline{\varphi}-\sigma(\sigma-1)\psi\int_{-\infty}^x|\varphi|^{2(\sigma-2)}\Im(\psi^2\overline{\varphi}^2)\,dy. 
\end{align*}
Thus, if $u$ solves \eqref{gdnls} then $(\varphi,\psi)$ solves 
\begin{equation}
\label{system equation}
\begin{cases}
L\varphi&=i\sigma|\varphi|^{2(\sigma-1)}\varphi^2\overline{\psi}-\sigma(\sigma-1)\varphi\int_{-\infty}^x|\varphi|^{2(\sigma-2)}\Im(\psi^2\overline{\varphi}^2)\,dy,\\
L\psi&=-i\sigma |\varphi|^{2(\sigma-1)}\psi^2\overline{\varphi}-\sigma(\sigma-1)\psi\int_{-\infty}^x|\varphi|^{2(\sigma-2)}\Im(\psi^2\overline{\varphi}^2)\,dy.
\end{cases}
\end{equation}
For convenience, we define
\begin{align}
P(\varphi,\psi)&=i\sigma|\varphi|^{2(\sigma-1)}\varphi^2\overline{\psi}-\sigma(\sigma-1)\varphi\int_{-\infty}^x|\varphi|^{2(\sigma-2)}\Im(\psi^2\overline{\varphi}^2)\label{eq of P},\\
Q(\varphi,\psi)&=-i\sigma |\varphi|^{2(\sigma-1)}\psi^2\overline{\varphi}-\sigma(\sigma-1)\psi\int_{-\infty}^x|\varphi|^{2(\sigma-2)}\Im(\psi^2\overline{\varphi}^2).\label{eq of Q}
\end{align} 
Let $R$ be the multi-soliton profile satisfying the assumption of Theorem \ref{main result}. Define $h,k$ by
\begin{align*}
h(t,x)&=\exp\left(\frac{i}{2}\int_{-\infty}^x|R(t,x)|^{2\sigma}\,dy\right)R(t,x),\\
k&=\partial_xh-\frac{i}{2}|h|^{2\sigma}h.
\end{align*}
Since $R_j$ solves \eqref{gdnls} for each $1\leq j\leq K$, we have
\begin{equation}
\label{equation of profile of multi soliton before}
LR+i|R|^{2\sigma}R_x=-\sum_{j}i|R_j|^{2\sigma}R_{jx}+i|R|^{2\sigma}R_x.
\end{equation}
By Lemma \ref{estimate on profile} for $t \gg T_0$ large enough we have
\begin{align}\label{estimate need to prove}
\left\lVert-\sum_{j}i|R_j|^{2\sigma}R_{jx}+i|R|^{2\sigma}R_x\right\rVert_{H^2} &\leq e^{-\lambda t}.
\end{align}
Thus, we rewrite \eqref{equation of profile of multi soliton before} as follows:
\begin{equation}
\label{equation of profile of multi soliton}
LR+i|R|^{2\sigma}R_x=e^{-\lambda t}v,
\end{equation}
where
\begin{equation}
\label{equation of v}
v=e^{\lambda t}(-\sum_{j}i|R_j|^{2\sigma}R_{jx}+i|R|^{2\sigma}R_x).
\end{equation}
By an elementary calculation, we have
\begin{equation}
\label{system h k}
\begin{cases}
Lh=i\sigma|h|^{2(\sigma-1)}h^2\overline{k}-\sigma(\sigma-1)h\int_{-\infty}^x|h|^{2(\sigma-2)}\Im(k^2\overline{h}^2)\,dy+e^{-\lambda t}m(t,x),\\
Lk=-i\sigma|h|^{2(\sigma-1)}k^2\overline{h}-\sigma(\sigma-1)k\int_{-\infty}^x|h|^{2(\sigma-2)}\Im(k^2\overline{h}^2)\,dy+e^{-\lambda t}n(t,x).
\end{cases}
\end{equation}
where 
\begin{align}
m&=\exp\left(\frac{i}{2}\int_{-\infty}^x|R|^{2\sigma}\,dy\right)v-\sigma h\int_{-\infty}^x|R|^{2(\sigma-1)}\Im(\overline{R}v)\,dy, \label{equation of m}\\
n&=\exp\left(\frac{i}{2}\int_{-\infty}^x|R|^{2\sigma}\,dy\right)e^{-\lambda t}(\partial_x v-\sigma\partial_xR\int_{-\infty}^x|R|^{2(\sigma-1)}\Im(\overline{R}v)\,dy).\label{equation of n}
\end{align}
Since $v$ is uniformly bounded in time in $H^2(\R)$, we see that $m,n$ are uniformly bounded in time in $H^1(\R)$. Let $\tilde{\varphi}=\varphi-h$ and $\tilde{\psi}=\psi-k$. Then $(\tilde{\varphi},\tilde{\psi})$ solves:
\begin{equation}
\label{system new}
\begin{cases}
L\tilde{\varphi}=P(\varphi,\psi)-P(h,k)-e^{-\lambda t}m(t,x),\\
L\tilde{\psi}=Q(\varphi,\psi)-Q(h,k)-e^{-\lambda t}n(t,x).
\end{cases}
\end{equation}
Set $\eta=(\tilde{\varphi},\tilde{\psi})$, $W=(h,k)$ and $f(\varphi,\psi)=(P(\varphi,\psi),Q(\varphi,\psi)$ and $H=e^{-\lambda t}(m,n)$. We find a solutions of \eqref{system new} in Duhamel form:
\begin{equation}
\label{equation of eta}
\eta(t)=i\int_t^{\infty}[f(W+\eta)-f(W)+H](s)\,ds,
\end{equation}
where $S(t)$ denote the Schr\"odinger group. Moreover, since $\psi=\partial_x\varphi-\frac{i}{2}|\varphi|^{2\sigma}\varphi$, we have
\begin{equation}
\label{relation of tilde psi and tilde varphi}
\tilde{\psi}=\partial_x\tilde{\varphi}-\frac{i}{2}(|\tilde{\varphi}+h|^{2\sigma}(\tilde{\varphi}+h)-|h|^{2\sigma}h).
\end{equation}
\textbf{Step 2. Existence of a solution of the system}\\
From Lemma \ref{existence slution of system}, there exists $T_{*} \gg 1$ such that for $T_0 \geq T_{*}$ there exists a unique solution $\eta$ of \eqref{system new} defined on $[T_0,T_{*})$ such that 
\begin{equation}
\label{estimate on solution of system}
\norm{\eta}_X:=e^{\lambda t}\norm{\eta}_{S([t,\infty)) \times S([t,\infty))}+e^{\lambda t}\norm{\partial_x\eta}_{S([t,\infty))\times S([t,\infty))}\leq 1 \quad \forall t\geq T_0.
\end{equation}
Thus, for all $t \geq T_0$, we have
\begin{equation}
\label{estimate on H1 of solution of system}
\norm{\tilde{\varphi}}_{H^1}+\norm{\tilde{\psi}}_{H^1}\lesssim e^{-\lambda t}.
\end{equation}
\textbf{Step 3. Existence of a multi-soliton train}

We prove that the solution $\eta=(\tilde{\varphi},\tilde{\psi})$ of \eqref{system new} satisfies the relation \eqref{relation of tilde psi and tilde varphi}. Set $\varphi=\tilde{\varphi}+h$, $\psi=\tilde{\psi}+k$ and $v=\partial_x\varphi-\frac{i}{2}|\varphi|^2\varphi$ and $\tilde{v}=v-k$. Since $(\tilde{\varphi},\tilde{\psi})$ solves \eqref{system new} and $(h,k)$ solves \eqref{system h k}, we have $(\varphi,\psi)$ solves \eqref{system equation}. Furthermore,
\begin{align}
Lv&=\partial_xL\varphi-\frac{i}{2}L(|\varphi|^{2\sigma}\varphi). \label{eq 1000}
\end{align}
Moreover,
\begin{align*}
&L(|\varphi|^{2\sigma}\varphi)\\
&=(i\partial_t+\partial_x^2)(\varphi^{\sigma+1}\overline{\varphi}^{\sigma})=i\partial_t(\varphi^{\sigma+1}\overline{\varphi}^{\sigma})+\partial_x^2(\varphi^{\sigma+1}\overline{\varphi}^{\sigma})\\
&=i(\sigma+1)|\varphi|^{2\sigma}\partial_t\varphi+i\sigma|\varphi|^{2(\sigma-1)}\varphi^2\partial_t\overline{\varphi}\\
&\quad +\partial_x((\sigma+1)|\varphi|^{2\sigma}\partial_x\varphi+\sigma |\varphi|^{2(\sigma-1)}\varphi^2\partial_x\overline{\varphi})\\
&=i(\sigma+1)|\varphi|^{2\sigma}\partial_t\varphi+i\sigma|\varphi|^{2(\sigma-1)}\varphi^2\partial_t\overline{\varphi}+(\sigma+1)\left[\partial_x^2\varphi|\varphi|^{2\sigma}+\partial_x\varphi\partial_x(|\varphi|^{2\sigma})\right]\\
&\quad+\sigma\left[\partial_x^2\overline{\varphi}|\varphi|^{2(\sigma-1)}\varphi^2+(\sigma+1)|\partial_x\varphi|^2|\varphi|^{2(\sigma-1)}\varphi+(\sigma-1)|\varphi|^{2(\sigma-2)}\varphi^3(\partial_x\overline{\varphi})^2\right]\\
&=(\sigma+1)|\varphi|^{2\sigma}(i\partial_t\varphi+\partial_x^2\varphi)+\sigma|\varphi|^{2(\sigma-1)}\varphi^2(i\partial_t\overline{\varphi}+\partial_x^2\overline{\varphi})+(\sigma+1)\partial_x\varphi\partial_x(|\varphi|^{2\sigma})\\
&\quad+\sigma(\sigma+1)|\partial_x\varphi|^2|\varphi|^{2(\sigma-1)}\varphi+\sigma(\sigma-1)(\partial_x\overline{\varphi})^2|\varphi|^{2(\sigma-2)}\varphi^3\\
&=(\sigma+1)|\varphi|^{2\sigma}L\varphi+\sigma|\varphi|^{2(\sigma-1)}\varphi^2(-\overline{L\varphi}+2\partial_x^2\overline{\varphi})+(\sigma+1)\partial_x\varphi\partial_x(|\varphi|^{2\sigma})\\
&\quad+\sigma(\sigma+1)|\partial_x\varphi|^2|\varphi|^{2(\sigma-1)}\varphi+\sigma(\sigma-1)(\partial_x\overline{\varphi})^2|\varphi|^{2(\sigma-2)}\varphi^3.
\end{align*}
Combining with \eqref{eq 1000} and using \eqref{system equation}, we have
\begin{align*}
Lv&=\partial_xL\varphi-\frac{i}{2}L(|\varphi|^{2\sigma}\varphi)\\
&=\partial_xL\varphi-\frac{i}{2}\left[(\sigma+1)|\varphi|^{2\sigma}L\varphi+\sigma|\varphi|^{2(\sigma-1)}\varphi^2(-\overline{L\varphi}+2\partial_x^2\overline{\varphi})\right.\\
&\left. +(\sigma+1)\partial_x\varphi\partial_x(|\varphi|^{2\sigma})+\sigma(\sigma+1)|\partial_x\varphi|^2|\varphi|^{2(\sigma-1)}\varphi+\sigma(\sigma-1)(\partial_x\overline{\varphi})^2|\varphi|^{2(\sigma-2)}\varphi^3\right]\\
&=\partial_x(P(\varphi,\psi)-P(\varphi,v))+\partial_xP(\varphi,v)-\frac{i}{2}(\sigma+1)|\varphi|^{2\sigma}\left(P(\varphi,\psi)-P(\varphi,v)\right)\\
&\quad-\frac{i}{2}(\sigma+1)|\varphi|^{2\sigma}P(\varphi,v)+\frac{i}{2}\sigma|\varphi|^{2(\sigma-1)}\varphi^2(\overline{P(\varphi,\psi)}-\overline{P(\varphi,v)})\\
&\quad+\frac{i}{2}\sigma|\varphi|^{2(\sigma-1)}\varphi^2\overline{P(\varphi,v)}-i\sigma|\varphi|^{2(\sigma-1)}\varphi^2\partial_x^2\overline{\varphi}\\
&\quad-\frac{i}{2}\left[(\sigma+1)\partial_x\varphi\partial_x(|\varphi|^{2\sigma})+\sigma(\sigma+1)|\partial_x\varphi|^2|\varphi|^{2(\sigma-1)}\varphi \right.\\
&\quad\left. +\sigma(\sigma-1)(\partial_x\overline{\varphi})^2|\varphi|^{2(\sigma-2)}\varphi^3\right]\\
&=\partial_x(P(\varphi,\psi)-P(\varphi,v))-\frac{i}{2}(\sigma+1)|\varphi|^{2\sigma}\left(P(\varphi,\psi)-P(\varphi,v)\right)\\
&\quad+\frac{i}{2}\sigma|\varphi|^{2(\sigma-1)}\varphi^2(\overline{P(\varphi,\psi)}-\overline{P(\varphi,v)})+G(\varphi,v),
\end{align*}
where $G(\varphi,v)$ contains the remaining ingredients and $G(\varphi,v)$ only depends on $\varphi$ and $v$:
\begin{align}
&G(\varphi,v)\nonumber\\
&=\partial_xP(\varphi,v)-\frac{i}{2}(\sigma+1)|\varphi|^{2\sigma}P(\varphi,v)+\frac{i}{2}\sigma|\varphi|^{2(\sigma-1)}\varphi^2\overline{P(\varphi,v)}\nonumber\\
&\quad-i\sigma|\varphi|^{2(\sigma-1)}\varphi^2\partial_x^2\overline{\varphi}-\frac{i}{2}\left[(\sigma+1)\partial_x\varphi\partial_x(|\varphi|^{2\sigma})+\sigma(\sigma+1)|\partial_x\varphi|^2|\varphi|^{2(\sigma-1)}\varphi\right.\nonumber\\
&\quad\left.+\sigma(\sigma-1)(\partial_x\overline{\varphi})^2|\varphi|^{2(\sigma-2)}\varphi^3\right]. \label{eq of G(varphi,v)}
\end{align}
As the calculations of $L\psi$ in the step 1, noting that the role of $v$ is similar to the role of $\psi$ in the process of calculation, we have $G(\varphi,v)=Q(\varphi,v)$ (see Lemma \ref{equality of G and Q} for a detailed proof). Hence, 
\begin{align*}
L\psi-Lv&=Q(\varphi,\psi)-Q(\varphi,v)-\partial_x(P(\varphi,\psi)-P(\varphi,v))\\
&\quad+\frac{i}{2}(\sigma+1)|\varphi|^{2\sigma}\left(P(\varphi,\psi)-P(\varphi,v)\right)\\
&\quad-\frac{i}{2}\sigma|\varphi|^{2(\sigma-1)}\varphi^2(\overline{P(\varphi,\psi)}-\overline{P(\varphi,v)}).
\end{align*}
Thus,
\begin{align}
L\tilde{\psi}-L\tilde{v}&=L\psi-Lv\nonumber\\
&=Q(\varphi,\tilde{\psi}+k)-Q(\varphi,\tilde{v}+k)-\partial_x(P(\varphi,\tilde{\psi}+k)-P(\varphi,\tilde{v}+k)\nonumber\\
&\quad+\frac{i}{2}(\sigma+1)|\varphi|^{2\sigma}(P(\varphi,\tilde{\psi}+k)-P(\varphi,\tilde{v}+k))\nonumber\\
&\quad-\frac{i}{2}\sigma|\varphi|^{2(\sigma-1)}\varphi^2(\overline{P(\varphi,\tilde{\psi}+k)}-\overline{P(\varphi,\tilde{v}+k)}).\label{eq 111}
\end{align}
Multiplying both side of \eqref{eq 111} by $\overline{\tilde\psi-\tilde{v}}$, taking imaginary part and integrating over space with integration by parts we obtain
\begin{align}
&\frac{1}{2}\partial_t\norm{\tilde{\psi}-\tilde{v}}^2_{L^2}\nonumber\\
&= \Im\int_{\R}(Q(\varphi,\tilde{\psi}+k)-Q(\varphi,\tilde{v}+k))(\overline{\tilde{\psi}}-\overline{\tilde{v}})\,dx\label{term1q}\\
&\quad -\Im\int_{\R}\partial_x(P(\varphi,\tilde{\psi}+k)-P(\varphi,\tilde{v}+k))(\overline{\tilde{\psi}}-\overline{\tilde{v}})\,dx\label{term2w}\\
&\quad +(\sigma+1)\Im \int_{\R}\frac{i}{2}|\varphi|^{2\sigma}(P(\varphi,\tilde{\psi}+k)-P(\varphi,\tilde{v}+k))(\overline{\tilde{\psi}}-\overline{\tilde{v}})\,dx\label{term3t}\\
&\quad -\sigma\Im\int_{\R}\frac{i}{2}|\varphi|^{2(\sigma-1)}\varphi^2(\overline{P(\varphi,\tilde{\psi}+k)}-\overline{P(\varphi,\tilde{v}+k)})(\overline{\tilde{\psi}}-\overline{\tilde{v}})\,dx.\label{term4m}
\end{align}
We denote by $A,B,C,D$ the terms \eqref{term1q}, \eqref{term2w}, \eqref{term3t} and \eqref{term4m} respectively. First, we try to estimate $A,B,C,D$ in term of $R$. We have 
\begin{align}
|A|&\lesssim\left|\int_{\R}(Q(\varphi,\tilde{\psi}+k)-Q(\varphi,\tilde{v}+k))(\overline{\tilde{\psi}}-\overline{\tilde{v}})\,dx\right|\nonumber\\
&\lesssim \left|\int_{\R}|\varphi|^{2(\sigma-1)}\overline{\varphi}((\tilde{\psi}+k)^2-(\tilde{v}+k)^2)(\overline{\tilde{\psi}}-\overline{\tilde{v}})\,dx\right|\nonumber\\
&\quad+\left|\int_{\R}\left[(\tilde{\psi}+k)\int_{-\infty}^x|\varphi|^{2(\sigma-2)}\Im((\tilde{\psi}+k)^2\overline{\varphi}^2)\,dy\right.\right.\nonumber\\
&\quad \left. \left.-(\tilde{v}+k)\int_{-\infty}^x|\varphi|^{2(\sigma-2)}\Im((\tilde{v}+k)^2\overline{\varphi}^2)\,dy\right]  (\overline{\tilde{\psi}}-\overline{\tilde{v}}) \,dx\right|\nonumber\\
&\lesssim \left|\int_{\R}|\varphi|^{2(\sigma-1)}\overline{\varphi}((\tilde{\psi}+k)^2-(\tilde{v}+k)^2)(\overline{\tilde{\psi}}-\overline{\tilde{v}})\,dx\right|\nonumber\\
&\quad +\left|\int_{\R}\left[(\tilde{\psi}-\tilde{v})\int_{-\infty}^x|\varphi|^{2(\sigma-2)}\Im((\tilde{\psi}+k)^2\overline{\varphi}^2)\,dy\right](\overline{\tilde{\psi}}-\overline{\tilde{v}})\,dx\right|\nonumber\\
&\quad+\left|\int_{\R}\left[(\tilde{v}+k)\int_{-\infty}^x|\varphi|^{2(\sigma-2)}\Im(\overline{\varphi}^2((\tilde{\psi}+k)^2-(\tilde{v}+k)^2))\,dy\right](\overline{\tilde{\psi}}-\overline{\tilde{v}})\,dx\right|\nonumber\\
&\lesssim \norm{\tilde{\psi}-\tilde{v}}^2_{L^2}\norm{\varphi}^{2\sigma-1}_{L^{\infty}}\norm{\tilde{\psi}+\tilde{v}+2k}_{L^{\infty}}\nonumber\\
&\quad+\norm{\tilde{\psi}-\tilde{v}}^2_{L^2}\left\lVert\int_{-\infty}^x|\varphi|^{2(\sigma-2)}\Im((\tilde{\psi}+k)^2\overline{\varphi}^2)\,dy\right\rVert_{L^{\infty}_x}\nonumber\\
&\quad+\norm{\tilde{\psi}-\tilde{v}}_{L^2}\norm{\tilde{v}+k}_{L^2}\left\lVert\int_{-\infty}^x|\varphi|^{2(\sigma-2)}\Im(\overline{\varphi}^2((\tilde{\psi}+k)^2-(\tilde{v}+k)^2))\,dy\right\rVert_{L^{\infty}_x}\nonumber\\
&\lesssim  \norm{\tilde{\psi}-\tilde{v}}^2_{L^2}\norm{\varphi}^{2\sigma-1}_{L^{\infty}}\norm{\tilde{\psi}+\tilde{v}+2k}_{L^{\infty}}+\norm{\tilde{\psi}-\tilde{v}}^2_{L^2}\norm{\varphi^{2(\sigma-1)}(\tilde{\psi}+k)^2}_{L^1_x}\nonumber\\
&\quad +\norm{\tilde{\psi}-\tilde{v}}_{L^2}\norm{\tilde{v}+k}_{L^2}\norm{\varphi^{2(\sigma-1)}((\tilde{\psi}+k)^2-(\tilde{v}+k)^2)}_{L^1}\nonumber\\
&\lesssim \norm{\tilde{\psi}-\tilde{v}}^2_{L^2}\norm{\varphi}^{2\sigma-1}_{L^{\infty}}\norm{\tilde{\psi}+\tilde{v}+2k}_{L^{\infty}}+\norm{\tilde{\psi}-\tilde{v}}^2_{L^2}\norm{\varphi^{2(\sigma-1)}(\tilde{\psi}+k)^2}_{L^1}\nonumber\\
&\quad+\norm{\tilde{\psi}-\tilde{v}}^2_{L^2}\norm{\tilde{v}+k}_{L^2}\norm{\varphi^{2(\sigma-1)}(\tilde{\psi}+\tilde{v}+2k)}_{L^2}\nonumber\\
&\lesssim \norm{\tilde{\psi}-\tilde{v}}^2_{L^2}K_1, \label{estimate of A}   
\end{align}
where,
\begin{align*}
K_1&:=\norm{\varphi}^{2\sigma-1}_{L^{\infty}}\norm{\tilde{\psi}+\tilde{v}+2k}_{L^{\infty}}+\norm{\varphi^{2(\sigma-1)}(\tilde{\psi}+k)^2}_{L^1}+\norm{\tilde{v}+k}_{L^2}\norm{\varphi^{2(\sigma-1)}(\tilde{\psi}+\tilde{v}+2k)}_{L^2}.
\end{align*}
Furthermore, 
\begin{align}
|B|&\lesssim \left|\int_{\R}\partial_x(|\varphi|^{2(\sigma-1)}\varphi^2(\overline{\tilde{\psi}}-\overline{\tilde{v}}))(\overline{\tilde{\psi}}-\overline{\tilde{v}})\,dx\right|\nonumber\\
&\quad+\left|\int_{\R}\partial_x\left(\varphi\int_{-\infty}^x|\varphi|^{2(\sigma-2)}\Im(\overline{\varphi}^2((\tilde{\psi}+k)^2-(\tilde{v}+k)^2))\,dy\right)(\overline{\tilde{\psi}}-\overline{\tilde{v}})\,dx\right|\nonumber\\
&\lesssim \left|\int_{\R}\partial_x(|\varphi|^{2(\sigma-1)}\varphi^2)(\overline{\tilde{\psi}}-\overline{\tilde{v}})^2\,dx\right|+\left||\varphi|^{2(\sigma-1)}\varphi^2\frac{1}{2}\partial_x((\tilde{\psi}-\tilde{v})^2)\,dx\right|\label{eqqa1}\\
& \quad+\left|\int_{\R}\partial_x\varphi\int_{-\infty}^x|\varphi|^{2(\sigma-2)}\Im(\overline{\varphi}^2(\tilde{\psi}-\tilde{v})(\tilde{\psi}+\tilde{v}+2k))\,dy(\overline{\tilde{\psi}}-\overline{\tilde{v}})\,dx\right|\nonumber\\
&\quad+\left|\int_{\R}\varphi|\varphi|^{2(\sigma-2)}\Im(\overline{\varphi}^2(\tilde{\psi}-\tilde{v})(\tilde{\psi}+\tilde{v}+2k))(\overline{\tilde{\psi}}-\overline{\tilde{v}})\,dx\right|.\nonumber
\end{align}
By using integration by parts for the second term of \eqref{eqqa1} and using H\"older inequality we have
\begin{align}
|B|&\lesssim \norm{\tilde{\psi}-\tilde{v}}_{L^2}^2\norm{\partial_x(|\varphi|^{2(\sigma-1)}\varphi^2)}_{L^{\infty}}+\norm{\tilde{\psi}-\tilde{v}}_{L^2}^2\norm{\partial_x(|\varphi|^{2(\sigma-1)}\varphi^2)}_{L^{\infty}} \nonumber\\
&\quad+\norm{\partial_x\varphi}_{L^2}\norm{\int_{-\infty}^x|\varphi|^{2(\sigma-2)}\Im(\overline{\varphi}^2(\tilde{\psi}-\tilde{v})(\tilde{\psi}+\tilde{v}+2k))\,dy}_{L^{\infty}_x}\norm{\tilde{\psi}-\tilde{v}}_{L^2}\nonumber\\
&\quad +\norm{\tilde{\psi}-\tilde{v}}_{L^2}^2\norm{\varphi^{2\sigma-1}(\tilde{\psi}+\tilde{v}+2k)}_{L^{\infty}}\nonumber\\
&\lesssim \norm{\tilde{\psi}-\tilde{v}}_{L^2}^2\norm{\partial_x(|\varphi|^{2(\sigma-1)}\varphi^2)}_{L^{\infty}}+\norm{\tilde{\psi}-\tilde{v}}_{L^2}^2\norm{\partial_x(|\varphi|^{2(\sigma-1)}\varphi^2)}_{L^{\infty}} \nonumber\\
&\quad+\norm{\partial_x\varphi}_{L^2}\norm{\tilde{\psi}-\tilde{v}}_{L^2}\norm{\varphi^{2(\sigma-1)}(\tilde{\psi}-\tilde{v})(\tilde{\psi}+\tilde{v}+2k)}_{L^1_x}\\
&\quad+\norm{\tilde{\psi}-\tilde{v}}_{L^2}^2\norm{\varphi^{2\sigma-1}(\tilde{\psi}+\tilde{v}+2k)}_{L^{\infty}}\nonumber\\
&\lesssim \norm{\tilde{\psi}-\tilde{v}}_{L^2}^2\norm{\partial_x(|\varphi|^{2(\sigma-1)}\varphi^2)}_{L^{\infty}}+\norm{\tilde{\psi}-\tilde{v}}_{L^2}^2\norm{\partial_x(|\varphi|^{2(\sigma-1)}\varphi^2)}_{L^{\infty}} \nonumber\\
&\quad+\norm{\partial_x\varphi}_{L^2}\norm{\tilde{\psi}-\tilde{v}}^2_{L^2}\norm{\varphi^{2(\sigma-1)}(\tilde{\psi}+\tilde{v}+2k)}_{L^2}+\norm{\tilde{\psi}-\tilde{v}}_{L^2}^2\norm{\varphi^{2\sigma-1}(\tilde{\psi}+\tilde{v}+2k)}_{L^{\infty}}\nonumber\\
&= \norm{\tilde{\psi}-\tilde{v}}_{L^2}^2K_2,\label{estimate for B}
\end{align}
where
\[
K_2:=\norm{\partial_x(|\varphi|^{2(\sigma-1)}\varphi^2)}_{L^{\infty}}+\norm{\partial_x\varphi}_{L^2}\norm{\varphi^{2(\sigma-1)}(\tilde{\psi}+\tilde{v}+2k)}_{L^2}+\norm{\varphi^{2\sigma-1}(\tilde{\psi}+\tilde{v}+2k)}_{L^{\infty}}.
\]
Using \eqref{eq of P}, we have
\begin{align}
|C|&\lesssim \left|\int_{\R}|\varphi|^{2\sigma}|\varphi|^{2(\sigma-1)}\varphi^2(\overline{\tilde\psi}-\overline{\tilde{v}})^2\,dx\right|\nonumber\\
&\quad+\left|\int_{\R}|\varphi|^{2\sigma}\varphi\int_{-\infty}^x|\varphi|^{2(\sigma-2)}\Im(\overline{\varphi}^2((\tilde{\psi}+k)^2-(\tilde{v}+k)^2))\,dy(\overline{\tilde{\psi}}-\overline{\tilde{v}})\,dx\right|\nonumber\\
&\lesssim \norm{\tilde{\psi}-\tilde{v}}^2_{L^2}\norm{\varphi^{4\sigma}}_{L^{\infty}}\nonumber\\
&\quad+\norm{\tilde{\psi}-\tilde{v}}_{L^2}\norm{\varphi^{2\sigma+1}}_{L^2}\norm{\int_{-\infty}^x|\varphi|^{2(\sigma-2)}\Im(\overline{\varphi}^2(\tilde{\psi}-\tilde{v})(\tilde{\psi}+\tilde{v}+2k))\,dy}_{L^{\infty}_x}\nonumber\\
&\lesssim \norm{\tilde{\psi}-\tilde{v}}^2_{L^2}\norm{\varphi^{4\sigma}}_{L^{\infty}}+\norm{\tilde{\psi}-\tilde{v}}_{L^2}\norm{\varphi^{2\sigma+1}}_{L^2}\norm{\varphi^{2(\sigma-1)}(\tilde{\psi}-\tilde{v})(\tilde{\psi}+\tilde{v}+2k)}_{L^1}\nonumber\\
&\lesssim \norm{\tilde{\psi}-\tilde{v}}^2_{L^2}\norm{\varphi^{4\sigma}}_{L^{\infty}}+\norm{\tilde{\psi}-\tilde{v}}^2_{L^2}\norm{\varphi^{2\sigma+1}}_{L^2}\norm{\varphi^{2(\sigma-1)}(\tilde{\psi}+\tilde{v}+2k)}_{L^2}\nonumber\\
&=\norm{\tilde{\psi}-\tilde{v}}^2_{L^2}K_3,\label{estimate for C}
\end{align}
where
\[
K_3:=\norm{\varphi^{4\sigma}}_{L^{\infty}}+\norm{\varphi^{2\sigma+1}}_{L^2}\norm{\varphi^{2(\sigma-1)}(\tilde{\psi}+\tilde{v}+2k)}_{L^2}.
\] 
Now, we give an estimate for $D$. We have
\begin{align}
|D|&\lesssim \left|\int_{\R}|\varphi|^{2(\sigma-1)}\varphi^2|\varphi|^{2(\sigma-1)}\overline{\varphi}^2(\tilde{\psi}-\tilde{v})(\overline{\tilde{\psi}}-\overline{\tilde{v}})\,dx\right|\nonumber\\
&\quad+\left|\int_{\R}|\varphi|^{2(\sigma-1)}\varphi^2\varphi\int_{-\infty}^x|\varphi|^{2(\sigma-2)}\Im(\overline{\varphi}^2((\tilde{\psi}+k)^2-(\tilde{v}+k)^2))\,dy(\overline{\tilde{\psi}}-\overline{\tilde{v}})\,dx\right|\nonumber\\
&\lesssim \norm{\tilde{\psi}-\tilde{v}}^2_{L^2}\norm{\varphi^{4\sigma}}_{L^{\infty}}\nonumber\\
&\quad+\norm{\tilde{\psi}-\tilde{v}}_{L^2}\norm{\varphi^{2\sigma+1}}_{L^2}\norm{\int_{-\infty}^x|\varphi|^{2(\sigma-2)}\Im(\overline{\varphi}^2((\tilde{\psi}+k)^2-(\tilde{v}+k)^2))\,dy}_{L^{\infty}_x}\nonumber\\
&\lesssim \norm{\tilde{\psi}-\tilde{v}}^2_{L^2}\norm{\varphi^{4\sigma}}_{L^{\infty}}+\norm{\tilde{\psi}-\tilde{v}}_{L^2}\norm{\varphi^{2\sigma+1}}_{L^2}\norm{\varphi^{2(\sigma-1)}(\tilde{\psi}-\tilde{v})(\tilde{\psi}+\tilde{v}+2k)}_{L^1}\nonumber\\
&\lesssim \norm{\tilde{\psi}-\tilde{v}}^2_{L^2}\norm{\varphi^{4\sigma}}_{L^{\infty}}+\norm{\tilde{\psi}-\tilde{v}}^2_{L^2}\norm{\varphi^{2\sigma+1}}_{L^2}\norm{\varphi^{2(\sigma-1)}(\tilde{\psi}+\tilde{v}+2k)}_{L^2}\nonumber\\
&=\norm{\tilde{\psi}-\tilde{v}}^2_{L^2}K_4,\label{estimate for D}
\end{align}
where
\[
K_4:=\norm{\varphi^{4\sigma}}_{L^{\infty}}+\norm{\varphi^{2\sigma+1}}_{L^2}\norm{\varphi^{2(\sigma-1)}(\tilde{\psi}+\tilde{v}+2k)}_{L^2}.
\]
Combining \eqref{estimate of A}, \eqref{estimate for B}, \eqref{estimate for C} and \eqref{estimate for D}, we have
\begin{align*}
\left|\partial_t\norm{\tilde{\psi}-\tilde{v}}^2_{L^2}\right|&\lesssim \norm{\tilde{\psi}-\tilde{v}}^2_{L^2}(K_1+K_2+K_3+K_4).
\end{align*}
Using the Gr\"onwall inequality, we have
\begin{align}
\norm{\tilde{\psi}(t)-\tilde{v}(t)}^2_{L^2}&\lesssim \norm{\tilde{\psi}(N)-\tilde{v}(N)}^2_{L^2}\exp\left(\int_t^N (K_1+K_2+K_3+K_4)\,ds\right) \nonumber\\
&\leq e^{-2\lambda N}\exp\left(\int_t^N (K_1+K_2+K_3+K_4)\,ds\right).\label{estimate nice}
\end{align}
Now, we try to estimate $K_1+K_2+K_3+K_4$ in term of $R$. When we have this kind of estimate, we will use the assumption \eqref{condition of existence multi soliton} to obtain that $\tilde{\psi}=\tilde{v}$. We have
\begin{align}
&\int_t^N(K_1+K_2+K_3+K_4)\,ds\nonumber\\
&=\int_t^N \norm{\varphi}^{2\sigma-1}_{L^{\infty}}\norm{\tilde{\psi}+\tilde{v}+2k}_{L^{\infty}}+\norm{\varphi^{2(\sigma-1)}(\tilde{\psi}+k)^2}_{L^1}\nonumber\\
&\quad+\norm{\tilde{v}+k}_{L^2}\norm{\varphi^{2(\sigma-1)}(\tilde{\psi}+\tilde{v}+2k)}_{L^2}\,ds\label{term1new}\\
&\quad+\int_t^N\norm{\partial_x(|\varphi|^{2(\sigma-1)}\varphi^2)}_{L^{\infty}}+\norm{\partial_x\varphi}_{L^2}\norm{\varphi^{2(\sigma-1)}(\tilde{\psi}+\tilde{v}+2k)}_{L^2}\nonumber\\
&\quad+\norm{\varphi^{2\sigma-1}(\tilde{\psi}+\tilde{v}+2k)}_{L^{\infty}}\,ds\label{term2new}\\
&\quad+\int_t^N\norm{\varphi^{4\sigma}}_{L^{\infty}}+\norm{\varphi^{2\sigma+1}}_{L^2}\norm{\varphi^{2(\sigma-1)}(\tilde{\psi}+\tilde{v}+2k)}_{L^2}\,ds\label{term3new}\\
&\quad+\int_t^N\norm{\varphi^{4\sigma}}_{L^{\infty}}+\norm{\varphi^{2\sigma+1}}_{L^2}\norm{\varphi^{2(\sigma-1)}(\tilde{\psi}+\tilde{v}+2k)}_{L^2}\,ds\label{term4new} 
\end{align}
Using \eqref{estimate on solution of system} and \eqref{estimate on H1 of solution of system}, we have
\begin{align}
\norm{\varphi}_{L^{\infty}}&\leq\norm{\tilde{\varphi}}_{L^{\infty}}+\norm{h}_{L^{\infty}}\lesssim 1+\norm{h}_{L^{\infty}}\label{equseful1}\\
\norm{\varphi}_{L^2}&\leq \norm{\tilde{\varphi}}_{L^2}+\norm{h}_{L^2}\lesssim 1+\norm{h}_{L^2}\label{equseful2}\\
\norm{\psi}_{L^{\infty}}&\lesssim 1\label{equseful3}
\end{align}
We denote by $Z_1,Z_2,Z_3,Z_4$ the terms \eqref{term1new}, \eqref{term2new}, \eqref{term3new} and \eqref{term4new} respectively. Using \eqref{equseful1}, \eqref{equseful2}, \eqref{equseful3}, \eqref{estimate on solution of system} and \eqref{estimate on H1 of solution of system}, for $N \gg t$, we have
\begin{align*}
|Z_1|&\lesssim \norm{\varphi}^3_{L^4(t,N)L^{\infty}}\norm{\varphi}^{2(\sigma-2)}_{L^{\infty}L^{\infty}}\norm{\tilde{\psi}+\tilde{v}+2k}_{L^4(t,N)L^{\infty}}\\
&\quad+(N-t)\norm{\varphi}^{2(\sigma-1)}_{L^{\infty}L^{\infty}}(\norm{\tilde{\psi}}_{L^{\infty}L^2}+\norm{k}_{L^{\infty}L^2})^2\\
&\quad+\norm{\tilde{v}+k}_{L^{\frac{4}{3}}(t,N)L^2}\norm{\varphi}_{L^{\infty}L^2}\norm{\varphi}^{2(\sigma-1)}_{L^{\infty}L^{\infty}}(\norm{\tilde{\psi}+\tilde{v}}_{L^4(t,N)L^{\infty}}+\norm{k}_{L^4(t,N)L^{\infty}})\\
&\lesssim (N-t)^{\frac{3}{4}}\norm{\varphi}^{2\sigma-1}_{L^{\infty}L^{\infty}}(1+\norm{k}_{L^{\infty}L^{\infty}}(N-t)^{\frac{1}{4}})\\
&\quad+(N-t)(1+\norm{h}^{2(\sigma-1)}_{L^{\infty}L^{\infty}})(1+\norm{k}^2_{L^{\infty}L^2})\\
&\quad+(N-t)^{\frac{3}{4}}(1+\norm{k}_{L^{\infty}L^2})(1+\norm{h}_{L^{\infty}L^2})(1+\norm{h}^{2(\sigma-1)}_{L^{\infty}L^{\infty}})(1+(N-t)^{\frac{1}{4}}\norm{k}_{L^{\infty}L^{\infty}})\\
&\lesssim (N-t)\norm{k}_{L^{\infty}L^{\infty}}(1+\norm{h}^{2\sigma-1}_{L^{\infty}L^{\infty}})+(N-t)(1+\norm{h}^{2(\sigma-1)}_{L^{\infty}L^{\infty}})(1+\norm{k}^2_{L^{\infty}L^2})\\
&\quad+(N-t)\norm{k}_{L^{\infty}L^{\infty}}(1+\norm{k}_{L^{\infty}L^2})(1+\norm{h}_{L^{\infty}L^2})(1+\norm{h}^{2(\sigma-1)}_{L^{\infty}L^{\infty}})\\
&:=(N-t)W_1(h,k).
\end{align*}
Similarly, for $N\gg t$, we have
\begin{align*}
|Z_2|&\lesssim \norm{\partial_x\varphi\varphi^{2\sigma-1}}_{L^1(t,N)L^{\infty}}+(N-t)\norm{\partial_x\varphi}_{L^{\infty}(t,N)L^2}\norm{\varphi}^{2(\sigma-1)}_{L^{\infty}L^{\infty}}\norm{\tilde{\psi}+\tilde{v}+k}_{L^{\infty}(t,N)L^2}\\
&\quad+(N-t)^{\frac{3}{4}}\norm{\varphi}^{2\sigma-1}_{L^{\infty}L^{\infty}}(\norm{\tilde{\psi}+\tilde{v}}_{L^4(t,N)L^{\infty}}+\norm{k}_{L^4(t,N)L^{\infty}})\\
&\lesssim (N-t)^{\frac{3}{4}}(\norm{\partial_x\tilde\varphi}_{L^4(t,N)L^{\infty}}+\norm{\partial_xh}_{L^4(t,N)L^{\infty}})\norm{\varphi}^{2\sigma-1}_{L^{\infty}L^{\infty}}\\
&\quad+(N-t)(1+\norm{h}^{2(\sigma-1)}_{L^{\infty}L^{\infty}})(1+\norm{k}_{L^{\infty}L^2})\\
&\quad+(N-t)^{\frac{3}{4}}(1+\norm{h}^{2\sigma-1}_{L^{\infty}L^{\infty}})(1+(N-t)^{\frac{1}{4}}\norm{k}_{L^{\infty}L^{\infty}})\\
&\lesssim (N-t)\norm{\partial_xh}_{L^{\infty}L^{\infty}}(1+\norm{h}^{2\sigma-1}_{L^{\infty}L^{\infty}})+ (N-t)(1+\norm{h}^{2(\sigma-1)}_{L^{\infty}L^{\infty}})(1+\norm{k}_{L^{\infty}L^2})\\
&\quad+(N-t)\norm{k}_{L^{\infty}L^{\infty}}(1+\norm{h}^{2\sigma-1}_{L^{\infty}L^{\infty}})\\
&:=(N-t)W_2(h,k),
\end{align*}
and
\begin{align*}
&|Z_3|=|Z_4|\\
&\lesssim (N-t)(\norm{\tilde{\varphi}}_{L^{\infty}L^{\infty}}+\norm{h}_{L^{\infty}L^{\infty}})^{4\sigma}\\
&\quad+(N-t)\norm{\varphi}_{L^{\infty}L^2}\norm{\varphi}^{2\sigma}_{L^{\infty}L^{\infty}}\norm{\varphi}^{2(\sigma-1)}_{L^{\infty}L^{\infty}}(\norm{\tilde{\psi}+\tilde{v}}_{L^{\infty}L^2}+\norm{k}_{L^{\infty}L^2}) \\
&\lesssim (N-t)(1+\norm{h}^{4\sigma}_{L^{\infty}L^{\infty}})+(N-t)(1+\norm{h}_{L^{\infty}L^2})(1+\norm{h}^{4\sigma-2}_{L^{\infty}L^{\infty}})(1+\norm{k}_{L^{\infty}L^2})\\
&:=(N-t)W_3(h,k).
\end{align*}
Hence, from \eqref{estimate nice}, we have
\begin{align}
\norm{\tilde{\psi(t)}-\tilde{v(t)}}_{L^2}^2&\lesssim e^{-2\lambda N}\exp\left(\int_t^N(K_1+K_2+K_3+K_4)\,ds\right)\nonumber\\
&\lesssim e^{-2\lambda N}\exp((N-t)(W_1(h,k)+W_2(h,k)+W_3(h,k)))\label{estimate nice new}
\end{align}
The above estimate is not enough explicit. As said above, we would like to estimate the right hand side of \eqref{estimate nice new} in terms of $R$. Noting that $|h|=|R|$ and $|k|=|\partial_xR|$, we have
\begin{align*}
W_1(h,k)&=\norm{\partial_xR}_{L^{\infty}L^{\infty}}(1+\norm{R}^{2\sigma-1}_{L^{\infty}L^{\infty}})+(1+\norm{R}^{2(\sigma-1)}_{L^{\infty}L^{\infty}})(1+\norm{\partial_xR}^2_{L^{\infty}L^2})\\
&\quad+\norm{\partial_xR}_{L^{\infty}L^{\infty}}(1+\norm{\partial_xR}_{L^{\infty}L^2})(1+\norm{R}_{L^{\infty}L^2})(1+\norm{R}^{2(\sigma-1)}_{L^{\infty}L^{\infty}})  \\
&\lesssim (1+\norm{R}_{L^{\infty}L^{\infty}}^{2(\sigma-1)})\left[\norm{\partial_xR}_{L^{\infty}L^{\infty}}(1+\norm{R}_{L^{\infty}L^{\infty}})+(1+\norm{\partial_xR}_{L^{\infty}L^2})\right.\\
&\left.+\norm{\partial_xR}_{L^{\infty}L^{\infty}}(1+\norm{\partial_xR}_{L^{\infty}L^2})(1+\norm{R}_{L^{\infty}L^2})\right]\\
&\lesssim (1+\norm{R}_{L^{\infty}L^{\infty}}^{2(\sigma-1)})\times \\
&\quad \times \left[\norm{\partial_xR}_{L^{\infty}L^{\infty}}(1+\norm{R}_{L^{\infty}H^1})+(1+\norm{R}_{L^{\infty}H^1}^2)+\norm{\partial_xR}_{L^{\infty}L^{\infty}}(1+\norm{R}_{L^{\infty}H^1}^2)\right]\\
&\lesssim (1+\norm{R}_{L^{\infty}L^{\infty}}^{2(\sigma-1)})(1+\norm{R}_{L^{\infty}H^1}^2)(1+\norm{\partial_xR}_{L^{\infty}L^{\infty}}).
\end{align*}
Similarly, by noting that $|\partial_xh|\leq |k|+|h|^{2\sigma+1}$, we have
\begin{align*}
W_2(h,k)&\lesssim (\norm{k}_{L^{\infty}L^{\infty}}+\norm{h}^{2\sigma+1}_{L^{\infty}L^{\infty}})(1+\norm{h}^{2(\sigma-1)}_{L^{\infty}L^{\infty}})(1+\norm{h}_{L^{\infty}L^{\infty}})\\
&\quad+(1+\norm{h}^{2(\sigma-1)})(1+\norm{k}_{L^{\infty}L^2})+\norm{k}_{L^{\infty}L^{\infty}}(1+\norm{h}^{2(\sigma-1)}_{L^{\infty}L^{\infty}})(1+\norm{h}_{L^{\infty}L^{\infty}})\\
&\lesssim (1+\norm{h}^{2(\sigma-1)})\times\\
&\quad\times\left[(\norm{k}_{L^{\infty}L^{\infty}}+\norm{h}^{2\sigma+1}_{L^{\infty}L^{\infty}})(1+\norm{h}_{L^{\infty}L^{\infty}})\right.\nonumber\\
&\quad\left.+(1+\norm{k}_{L^{\infty}L^2})+\norm{k}_{L^{\infty}L^{\infty}}(1+\norm{h}_{L^{\infty}L^{\infty}})\right]\\
&\lesssim (1+\norm{h}^{2(\sigma-1)})\times\\
&\quad\times\left[(1+\norm{h}_{L^{\infty}L^{\infty}})(\norm{k}_{L^{\infty}L^{\infty}}+\norm{h}^{2\sigma+1}_{L^{\infty}L^{\infty}})+(1+\norm{k}_{L^{\infty}L^2})\right]\\
&=(1+\norm{R}_{L^{\infty}L^{\infty}}^{2(\sigma-1)})\times\\
&\quad\times\left[(1+\norm{R}_{L^{\infty}L^{\infty}})(\norm{\partial_xR}_{L^{\infty}L^{\infty}}+\norm{R}_{L^{\infty}L^{\infty}}^{2\sigma+1})+(1+\norm{\partial_xR}_{L^{\infty}L^2})\right]\\
&\lesssim (1+\norm{R}_{L^{\infty}L^{\infty}}^{2(\sigma-1)})(1+\norm{R}_{L^{\infty}H^1})(1+\norm{\partial_xR}_{L^{\infty}L^{\infty}}+\norm{R}_{L^{\infty}L^{\infty}}^{2\sigma+1})\\
&\lesssim (1+\norm{R}_{L^{\infty}L^{\infty}}^{2(\sigma-1)})(1+\norm{R}_{L^{\infty}H^1}^2)(1+\norm{\partial_xR}_{L^{\infty}L^{\infty}}+\norm{R}_{L^{\infty}L^{\infty}}^{2\sigma+1}),   
\end{align*}
and 
\begin{align*}
W_3(h,k)&=(1+\norm{R}_{L^{\infty}L^{\infty}}^{4\sigma})+(1+\norm{R}_{L^{\infty}L^2})(1+\norm{R}_{L^{\infty}L^{\infty}}^{4\sigma-2})(1+\norm{\partial_xR}_{L^{\infty}L^2})\\
&\lesssim (1+\norm{R}_{L^{\infty}L^{\infty}}^{4\sigma-2})\left[(1+\norm{R}_{L^{\infty}L^{\infty}}^2)+(1+\norm{R}_{L^{\infty}L^2})(1+\norm{\partial_xR}_{L^{\infty}L^2})\right]\\
&\lesssim (1+\norm{R}_{L^{\infty}L^{\infty}}^{4\sigma-2})(1+\norm{R}_{L^{\infty}H^1}^2).
\end{align*}
Combining the above estimates, we have
\begin{align*}
&W_1(h,k)+W_2(h,k)+W_3(h,k)\\
&\lesssim (1+\norm{R}_{L^{\infty}L^{\infty}}^{2(\sigma-1)})(1+\norm{R}_{L^{\infty}H^1}^2)(1+\norm{\partial_xR}_{L^{\infty}L^{\infty}}+\norm{R}_{L^{\infty}L^{\infty}}^{2\sigma+1})\\
&\quad+(1+\norm{R}_{L^{\infty}L^{\infty}}^{4\sigma-2})(1+\norm{R}_{L^{\infty}H^1}^2)\\
&\lesssim (1+\norm{R}_{L^{\infty}L^{\infty}}^{2(\sigma-1)})(1+\norm{R}_{L^{\infty}H^1}^2)(1+\norm{\partial_xR}_{L^{\infty}L^{\infty}}+\norm{R}_{L^{\infty}L^{\infty}}^{2\sigma+1})\\
&\quad+(1+\norm{R}_{L^{\infty}L^{\infty}}^{2(\sigma-1)})(1+\norm{R}_{L^{\infty}L^{\infty}}^{2\sigma})(1+\norm{R}_{L^{\infty}H^1}^2)\\
&\lesssim (1+\norm{R}_{L^{\infty}L^{\infty}}^{2(\sigma-1)})(1+\norm{R}_{L^{\infty}H^1}^2)(1+\norm{\partial_xR}_{L^{\infty}L^{\infty}}+\norm{R}_{L^{\infty}L^{\infty}}^{2\sigma+1}).
\end{align*}
Thus, there exists a positive constant $C_0$ such that 
\begin{align*}
&W_1(h,k)+W_2(h,k)+W_3(h,k)\\
&\leq C_0\left((1+\norm{R}_{L^{\infty}L^{\infty}}^{2(\sigma-1)})(1+\norm{R}_{L^{\infty}H^1}^2)(1+\norm{\partial_xR}_{L^{\infty}L^{\infty}}+\norm{R}_{L^{\infty}L^{\infty}}^{2\sigma+1})\right).
\end{align*}
Let $C_*=16C_0$. Using the assumption \eqref{condition of existence multi soliton}, we have
\[
W_1(h,k)+W_2(h,k)+W_3(h,k) \leq \frac{v_*}{16}=\lambda,
\] 
for $t$ large enough. Thus, by \eqref{estimate nice new}, we have
\[
\norm{\tilde{\psi}(t)-\tilde{v}(t)}^2_{L^2}\leq e^{-2\lambda N+(N-t)\lambda},
\]
for $t$ large enough. Letting $N \rightarrow \infty$ in the above estimate, we obtain 
\[
\norm{\tilde{\psi}(t)-\tilde{v}}_{L^2}^2=0,
\]
for all $t$ large enough. This implies that 
\begin{equation}\label{relation of psi and varphi new}
\tilde{\psi}=\partial_x\varphi-\frac{i}{2}|\varphi|^2\varphi-k,
\end{equation}
and then 
\[
\psi=\partial_x\varphi-\frac{i}{2}|\varphi|^2\varphi. 
\]
Moreover, since $(\tilde{\psi},\tilde{\varphi})$ solves \eqref{system new} we have $(\psi,\varphi)$ solves \eqref{system equation}. Combining with \eqref{relation of psi and varphi new}, if we set 
\[
u=\exp\left(-\frac{i}{2}\int_{-\infty}^x|\varphi|^{2\sigma}\,dy\right)\varphi
\]
then $u$ solves \eqref{gdnls}. Furthermore,
\begin{align*}
\norm{u-R}_{H^1} &=\left\lVert\exp\left(-\frac{i}{2}|\varphi|^{2\sigma}\,dy\right)\varphi-\exp\left(\frac{i}{2}|h|^{2\sigma}\,dy\right)h\right\rVert_{H^1}\\
&\lesssim C(\norm{\varphi}_{H^1},\norm{h}_{H^1}) \norm{\varphi-h}_{H^1}\lesssim \norm{\tilde{\varphi}}_{H^1}\lesssim e^{-\lambda t},
\end{align*} 
Thus for $t$ large enough, we have
\begin{equation}\label{eq finally}
\norm{u-R}_{H^1} \leq Ce^{-\lambda t},
\end{equation}
for $\lambda=\frac{1}{16}v_{*}$ and $C=C(\omega_1,...,\omega_K,c_1,...,c_K)$. This completes the proof of Theorem \ref{main result}.

\begin{remark}
In the case $\sigma=1$, the integrals in \eqref{system equation} disappear. In the case, $\sigma=2$, the integrals \eqref{system equation} reduce into $\int_{-\infty}^x\Im(\psi^2\overline{\varphi}^2)\,dy$, we do not need to use the inequality \eqref{eq 11111}. Thus, by similar arguments as in the proof of Theorem \ref{main result} we may prove that there exist multi-solitons solutions of \eqref{gdnls} when $\sigma=1$ or $\sigma=2$.
\end{remark}

\section{Some technical lemmas}
\label{section52}
\subsection{Properties of solitons}
In this section, we give the proof of \eqref{estimate need to prove}. We have the following result. 
\begin{lemma}\label{estimate on profile}
There exist $C>0$ and a constant $\lambda>0$ such that for $t>0$ large enough, the estimate \eqref{estimate need to prove} uniformly holds in time.
\end{lemma}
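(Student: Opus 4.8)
The plan is to exploit that the only obstruction to $R$ solving \eqref{gdnls} exactly is the nonlinear interaction between solitons travelling at distinct speeds, and that this interaction is exponentially small because the solitons separate linearly in time. First I would rewrite the error algebraically. Since $R_x=\sum_j R_{jx}$,
\[
-\sum_j i|R_j|^{2\sigma}R_{jx}+i|R|^{2\sigma}R_x=i\sum_j\bigl(|R|^{2\sigma}-|R_j|^{2\sigma}\bigr)R_{jx}.
\]
This isolates the mechanism: $R_{jx}$ is concentrated around the $j$-th soliton center, while $|R|^{2\sigma}-|R_j|^{2\sigma}$ measures, in that region, the influence of the \emph{other} solitons.

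Second, I would reduce to a pointwise estimate. Using the elementary bound $\bigl||a|^{2\sigma}-|b|^{2\sigma}\bigr|\lesssim(|a|^{2\sigma-1}+|b|^{2\sigma-1})|a-b|$ (valid for $\sigma\geq\tfrac12$) with $a=R$, $b=R_j$, so that $a-b=\sum_{k\neq j}R_k$, the error is pointwise dominated by a finite sum over $j\neq k$ of products $(|R|^{2\sigma-1}+|R_j|^{2\sigma-1})|R_{jx}||R_k|$. As $\norm{R}_{L^\infty}$ is bounded uniformly in time, the essential object is the cross product $|R_{jx}(t,x)||R_k(t,x)|$ with $j\neq k$.

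Third, the heart of the argument, I would use exponential localization together with linear separation of the centers. From \eqref{formula of varphi} the profile $\varphi_{\omega_j,c_j}$ and all its derivatives decay like $e^{-\frac{h_j}{2}|y|}$, so $R_j$ and $R_{jx}$ are bounded (up to harmless polynomial factors) by $Ce^{-\frac{h_j}{2}|x-x_j-c_jt|}$, centered at $y_j:=x_j+c_jt$. For $j\neq k$ these centers are separated by at least $\tfrac12|c_j-c_k|t$ once $t$ is large, and writing
\[
\tfrac{h_j}{2}|x-y_j|+\tfrac{h_k}{2}|x-y_k|\geq\tfrac12\min(h_j,h_k)|y_j-y_k|+\tfrac12\Bigl(\tfrac{h_j}{2}|x-y_j|+\tfrac{h_k}{2}|x-y_k|\Bigr)
\]
one extracts a prefactor $e^{-\frac14\min(h_j,h_k)|c_j-c_k|t}$ while retaining an integrable, spatially localized remainder, giving decay in every $L^p$. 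Summing over the finitely many pairs and taking the infimum over $j\neq k$ produces the stated rate $\lambda=\tfrac{1}{16}v_*=\tfrac{1}{16}\inf_{j\neq k}h_j|c_j-c_k|$, the slack between $\tfrac14$ and $\tfrac{1}{16}$ absorbing the polynomial factors and the coefficient $|R|^{2\sigma-1}+|R_j|^{2\sigma-1}$.

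Finally I would upgrade this to the $H^2$ norm by differentiating the rewritten error up to twice: each derivative falls either on a profile (or its derivative), which keeps the same exponential localization, or on the coefficient $|R|^{2\sigma-1}$ through the chain rule, where $\sigma\geq\tfrac52$ guarantees that the resulting factors $|R|^{2\sigma-2}$ and $|R|^{2\sigma-3}$ stay bounded even where $R$ is small, so no new singularity arises. Every resulting term is again a cross product of solitons centered at separated points, and the separation estimate applies verbatim, yielding $\norm{\cdot}_{H^2}\lesssim e^{-\lambda t}$. I expect the main obstacle to be precisely the bookkeeping of this third step: combining the exponential decay of the profiles with the linear-in-time separation of the centers so that the extracted exponential rate genuinely dominates all polynomial and coefficient contributions, uniformly in $x$ and over the finitely many interacting pairs, and matches $\lambda=v_*/16$.
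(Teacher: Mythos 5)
Your proof is correct, but it follows a genuinely different route from the paper's. You decompose the error algebraically into pairwise cross terms, writing it as $i\sum_j(|R|^{2\sigma}-|R_j|^{2\sigma})R_{jx}$ and reducing, via the power-difference inequality, to products $|R_{jx}||R_k|$ with $j\neq k$; you then keep half of the exponential weight for spatial integrability and spend the other half on the separation of the centers, which yields decay in every $L^p$ at once, and you correctly identify $\sigma\geq\tfrac52$ as what keeps the fractional powers nonnegative after two differentiations. The paper instead compares $R$, at each fixed $(t,x)$, with the locally dominant soliton $R_m$, $m=m(x)$ being the index minimizing $|x-c_jt|$, and bounds the nonlinearity and its first two derivatives by a mean-value argument (a supremum $A$ of the differentials of the nonlinear maps), obtaining first a $W^{2,\infty}$ bound of size $e^{-\frac14 v_*t}$; the passage to $H^2$ is then done not by retaining localization, as you do, but by proving a time-uniform $W^{2,1}$ bound and interpolating, which costs a factor $\tfrac12$ in the rate and leads to the final $\lambda=\tfrac1{16}v_*$ after absorbing constants. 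Your approach is more explicit about where the structure of the nonlinearity and the condition on $\sigma$ enter, while the paper's mean-value formulation avoids writing out the chain rule on the fractional powers and its interpolation trick avoids tracking spatial localization through the differences. One small slip: your splitting inequality is off by a factor of two --- the triangle inequality only gives
\begin{equation*}
\tfrac{h_j}{2}|x-y_j|+\tfrac{h_k}{2}|x-y_k|\geq\tfrac14\min(h_j,h_k)\,|y_j-y_k|+\tfrac12\left(\tfrac{h_j}{2}|x-y_j|+\tfrac{h_k}{2}|x-y_k|\right),
\end{equation*}
so the extracted prefactor is $e^{-\frac18\min(h_j,h_k)|c_j-c_k|t}$ rather than $e^{-\frac14\min(h_j,h_k)|c_j-c_k|t}$; since $\min(h_j,h_k)|c_j-c_k|\geq v_*$ and $\tfrac18>\tfrac1{16}$, this is harmless and your conclusion $\norm{\cdot}_{H^2}\lesssim e^{-\lambda t}$ with $\lambda=\tfrac1{16}v_*$ stands.
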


\begin{proof}
First, we need some estimates on the profile. We have
\begin{align*}
|R_j(t,x)|&=|\psi_{\omega_j,c_j}(t,x)|=|\phi_{\omega_j,c_j}(x-c_jt)|=|\varphi_{\omega_j,c_j}(x-c_jt)|\\
&\quad\approx\left(\frac{4\omega_j-c_j^2}{2\sqrt{\omega_j}\left(\cosh(\sigma h_j (x-c_jt))-\frac{c_j}{2\sqrt{\omega_j}}\right)}\right)^{\frac{1}{2\sigma}}\\
&\lesssim \left(\frac{4\omega_j-c_j^2}{2\sqrt{\omega_j}\left(\cosh(\sigma h_j(x-c_jt))-\frac{|c_j|}{2\sqrt{\omega_j}}\cosh(\sigma h_j(x-c_jt))\right)}\right)^{\frac{1}{2\sigma}}\\
&\lesssim \left(\frac{4\omega_j-c_j^2}{(2\sqrt{\omega_j}-|c_j|)\cosh(\sigma h_j(x-c_jt))}\right)^{\frac{1}{2\sigma}}\lesssim \left(\frac{2\sqrt{\omega_j}+|c_j|}{\cosh(\sigma h_j (x-c_jt))}\right)^{\frac{1}{2\sigma}}\\
&\lesssim_{\omega_j,|c_j|} e^{-\frac{h_j}{2}|x-c_jt|}, 
\end{align*}
Furthermore,
\begin{align*}
\partial_x\varphi_{\omega_j,c_j}(y)&\approx \left(\frac{h_j^2}{2\sqrt{\omega_j}}\right)^{\frac{1}{2\sigma}}\frac{-\sinh(\sigma h_jy)}{\left(\cosh(\sigma h_j y)-\frac{c_j}{\sqrt{\omega_j}}\right)^{1+\frac{1}{2\sigma}}}.
\end{align*}
Thus,
\begin{align*}
|\partial_x\varphi_{\omega_j,c_j}(y)|&\lesssim \left(\frac{h_j^2}{2\sqrt{\omega_j}}\right)^{\frac{1}{2\sigma}}\frac{|\sinh(\sigma h_j y)|}{\left(1-\frac{|c_j|}{\sqrt{\omega_j}}\right)^{1+\frac{1}{2\sigma}} \cosh(\sigma h_j y)^{1+\frac{1}{2\sigma}}}\\
&\lesssim_{\omega_j,|c_j|}\frac{1}{\cosh(\sigma h_jy)^{\frac{1}{2\sigma}}} \lesssim_{\omega_j,|c_j|} e^{-\frac{h_j}{2}|y|},
\end{align*}
Using the above estimates, we have
\begin{align*}
|\partial_xR_j(t,x)|&=|\partial_x\psi_{\omega_j,c_j}(t,x)|=|\partial_x\phi_{\omega_j,c_j}(x-c_jt)|\\
&=|\partial_x\varphi_{\omega_j,c_j}(x-c_jt)+i\varphi_{\omega_j,c_j}(x-c_jt)\partial_x\theta_{\omega_j,c_j}(x-c_jt)|\\
&\lesssim |\partial_x\varphi_{\omega_j,c_j}(x-c_jt)|+|\varphi_{\omega_j,c_j}(x-c_jt)||\partial_x\theta_{\omega_j,c_j}(x-c_jt)|\\
&\lesssim_{\omega_j,|c_j|} |\partial_x\varphi_{\omega_j,c_j}(x-c_jt)|+e^{\frac{-h_j}{2}|x-c_jt|}\\
&\lesssim_{\omega_j,|c_j|}  e^{-\frac{h_j}{2}|x-c_jt|}.
\end{align*}
By similar arguments, we have
\begin{align*}
|\partial_x^2R_j(t,x)|+|\partial_x^3R_j(t,x)| &\lesssim_{\omega_j,|c_j|}e^{\frac{-h_j}{2}|x-c_jt|},
\end{align*}
For convenience, we set
\begin{align*}
\chi&=-i|R|^{2\sigma}\partial_xR+i\Sigma_{j}|R_j|^{2\sigma}\partial_xR_{j},\\
f(R,\overline{R},\partial_xR)&= i|R|^{2\sigma}\partial_xR,\\
g(R,\overline{R},\partial_xR,\partial_x\overline{R},\partial_x^2R)&=i\partial_x(|R|^{2\sigma}\partial_xR),\\
r(R,\partial_xR,..,\partial_x^3R,\partial_x\overline{R},\partial_x^2\overline{R})&=i\partial_x^2(|R|^{2\sigma}\partial_xR).
\end{align*}
Fix $t>0$, for each $x\in\R$, choose $m=m(x)\in \left\{1,2,...,K\right\}$ so that
\[
|x-c_mt|=\min_{j}|x-c_jt|.
\] 
For $j \neq m$ we have
\[
|x-c_jt|\geq \frac{1}{2}(|x-c_jt|+|x-c_mt|)\geq \frac{1}{2}|c_jt-c_mt|=\frac{t}{2}|c_j-c_m|.
\]
Thus, we have
\begin{align*}
&|(R-R_m)(t,x)|+|\partial_x(R-R_m)(t,x)|+|\partial_x^2(R-R_m)(t,x)|+|\partial_x^3(R-R_m)(t,x)|\\
&\leq \sum_{j\neq m}(|R_j(t,x)|+|\partial_xR_j(t,x)|+|\partial_x^2R_j(t,x)|+|\partial_x^3R_j(t,x)|)\\
&\lesssim_{\omega_1,..,\omega_K,|c_1|,..,|c_K|}\delta_m(t,x):=\sum_{j\neq m}e^{\frac{-h_j}{2}|x-c_jt|}.
\end{align*}
Recall that
\[
v_{*}=\inf_{j\neq k}h_j|c_j-c_k|.
\]
We have
\begin{align*}
&|(R-R_m)(t,x)|+|\partial_x(R-R_m)(t,x)|+|\partial_x^2(R-R_m)(t,x)|+|\partial_x^3(R-R_m)(t,x)| \lesssim \delta_m(t,x)\\
&\lesssim e^{-\frac{1}{4}v_{*}t}.
\end{align*}
We see that $f,g,r$ are polynomials in $R$, $\partial_xR$, $\partial_x^2R$, $\partial_x^3R$, $\partial_x\overline{R}$ and $\partial_x^2\overline{R}$. Denote
\[
A=\sup_{|u|+|\partial_xu|+|\partial_x^2u|+|\partial_x^3u| \leq \sum_j\norm{R_j}_{H^4}}(|df|+|dg|+|dr|).
\]
We have
\begin{align*}
&|\chi|+|\partial_x\chi|+|\partial_x^2\chi|\\
&\leq |f(R,\overline{R}, \partial_xR)-f_{R_m,\partial_x\overline{R}_m, R_m}|+|g(R,\overline{R},\partial_xR,..)-g(R_m,\overline{R}_m,\partial_xR_m,..)|\\
&\quad +|r(R,\partial_xR,..,\partial_x^3R,\overline{R},..)-r(R_m,\partial_xR_m,..,\partial_x^3R_m,\overline{R}_m,..)|\\
&+\Sigma_{j\neq m}(f(R_j,\overline{R}_j,\partial_xR_j)+g(R_j,\partial_xR_j,\partial_x^2R_j,\overline{R}_j,\partial_x\overline{R}_j)+r(R_j,...,\partial_x^3R_j,\overline{R}_j,...,\partial_x^2\overline{R}_j))\\
&\lesssim A(|R-R_m|+|\partial_x(R-R_m)|+|\partial_x^2(R-R_m)|+|\partial_x^3(R-R_m)|)\\
&\quad+A\Sigma_{j\neq m}(|R_j|+|\partial_xR_j|+|\partial_x^2R_j|+|\partial_x^3R_j|)\\
&\lesssim 2A\Sigma_{j\neq m} (|R_j|+|\partial_xR_j|+|\partial_x^2R_j|+|\partial_x^3R_j|)\\
&\lesssim 2A \delta_m(t,x). 
\end{align*}
In particular,
\begin{equation}
\label{estimate of chi}
\norm{\chi}_{W^{2,\infty}} \lesssim e^{-\frac{1}{4}v_{*}t}.
\end{equation}
Moreover, 
\begin{align*}
\norm{\chi}_{W^{2,1}}&\lesssim \Sigma_j (\norm{|R_j|^{2\sigma}\partial_xR_j}_{L^1}+\norm{\partial_x(|R_j|^{2\sigma}\partial_xR_j)}_{L^1}+\norm{\partial_x^2(|R_j|^{2\sigma}\partial_xR_j)}_{L^1})\\
&\lesssim \Sigma_j (\norm{R_j}_{H^1}^(2\sigma+1)+\norm{R_j}_{H^2}^{2\sigma+1}+\norm{R_j}_{H^3}^{2\sigma+1})<\infty.
\end{align*}
Thus, using H\"older inequality we obtain
\begin{align*}
\norm{\chi}_{H^2}&\lesssim_{\omega_1,..,\omega_K,|c_1|,..,|c_K|} e^{-\frac{1}{8}v_{*}t}.
\end{align*}
It follows that if $t \gg \max\{\omega_1,...,\omega_K,|c_1|,...,|c_K|\}$ is large enough then 
\[
\norm{\chi}_{H^2} \leq e^{-\frac{1}{16}v_{*}t}.
\]
Setting $\lambda=\frac{1}{16}v_{*}$, we obtain the desired result. 
\end{proof}

%\subsection{Some useful estimates}

%\begin{lemma}\label{Estimate}
%Let $x \geq 0$. Then there exists $C=C(x)$ such that
%\begin{equation}\label{eq need for proving}
%(a+b)^x-a^x \leq C(x)(b^x+ba^{x-1}).
%\end{equation}
%for all $a,b\geq 0$. 
%\end{lemma}
%\begin{proof}
%If $x=0$ or $x=1$ or $b=0$ or $a=0$ then \eqref{eq need for proving} is true for $C(x)=1$. Consider $a,b>0$. If $0<x<1$ then using $m^x>m$ for $m<1$ and $0<x<1$ we have
%\[
%\left(\frac{a}{a+b}\right)^x+\left(\frac{b}{a+b}\right)^x>\frac{a}{a+b}+\frac{b}{a+b}=1.
%\]
%Hence,
%\[
%(a+b)^x<a^x+b^x,
%\]
%if we choose $C(x)=1$ then \eqref{eq need for proving} holds. Considering $a,b>0$ and $x>1$, we set
%\begin{align*}
%g(z)&=z^x, \quad \forall z\in\R.
%\end{align*}
%We have $g$ is class $C^1$. Thus, there exists $\xi\in (a,a+b)$ such that
%\[
%|(a+b)^x-a^x|=|g(a+b)-g(a)|=|bg'(\xi)|=bx\xi^{x-1}<xb(a+b)^{x-1}.
%\]
%If $x-1\leq 1$ then $(a+b)^{x-1}\leq a^{x-1}+b^{x-1}$ and hence we choose $C(x)=x$. If $x-1>1$ then by Jensen's inequality for convex function $f(z)=z^{x-1}$ we have
%\[
%\left(\frac{a+b}{2}\right)^{x-1}\leq \frac{a^{x-1}+b^{x-1}}{2}.
%\] 
%We obtain
%\begin{align*}
%(a+b)^x-a^x < xb(a+b)^{x-1} \leq 2^{x-2}xb(a^{x-1}+b^{x-1}).
%\end{align*}
%Choosing $C(x)=2^{x-2}x$, we obtain the desired result.
%\end{proof}
\subsection{Proof $G(\varphi,v)=Q(\varphi,v)$}
Let $G(\varphi,v)$ be defined as in \eqref{eq of G(varphi,v)} and $Q$ be defined as in \eqref{eq of Q}. Then we have the following result.
\begin{lemma}\label{equality of G and Q}
Let $v=\partial_x\varphi-\frac{i}{2}|\varphi|^2\varphi$. Then the following equality holds:
\[
G(\varphi,v)=Q(\varphi,v).
\]
\end{lemma}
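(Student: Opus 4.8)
The plan is to avoid a brute-force expansion and instead read the identity off from the way $G$ was produced. By its definition \eqref{eq of G(varphi,v)}, $G(\varphi,v)$ is exactly the expression obtained for $Lv$ in the proof of Theorem~\ref{main result} by starting from the purely algebraic identity
\begin{equation*}
Lv=\partial_xL\varphi-\frac{i}{2}L(|\varphi|^{2\sigma}\varphi),
\end{equation*}
expanding $L(|\varphi|^{2\sigma}\varphi)$ as there, and then substituting $P(\varphi,v)$ for every occurrence of $L\varphi$ and $\overline{P(\varphi,v)}$ for $\overline{L\varphi}$. In other words, $G(\varphi,v)$ is the value $Lv$ would take if $\varphi$ solved $L\varphi=P(\varphi,v)$. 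This is the precise sense in which $v$ plays the role of $\psi$: the manipulation is formal in the second argument.

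First I would note that, once $v=\partial_x\varphi-\frac{i}{2}|\varphi|^{2\sigma}\varphi$ is inserted, both $G(\varphi,v)$ and $Q(\varphi,v)$ become fixed differential--algebraic expressions in $\varphi,\partial_x\varphi,\partial_x^2\varphi$ and their conjugates, nonlocal only through the $\int_{-\infty}^x$ term; in particular neither contains $\partial_t\varphi$. Hence the asserted equality is a pointwise-in-$t$ identity in the spatial jet of $\varphi$, and it suffices to establish it for every $\varphi$ arising as the gauge transform of a genuine solution of \eqref{gdnls}.

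To exploit this, given an arbitrary smooth profile $\phi$ I would set $u_0=\exp\!\big(-\tfrac{i}{2}\int_{-\infty}^x|\phi|^{2\sigma}\,dy\big)\phi$ and let $u$ solve \eqref{gdnls} with $u(0)=u_0$ (local well-posedness, \cite{HaOz16}). Writing $\varphi=\exp(i\Lambda)u$ and $\psi=\exp(i\Lambda)\partial_xu$ as in \eqref{eqof varphi}--\eqref{eqof psi}, one has $\varphi(0)=\phi$, and by \eqref{eqof psi} the relation $\psi=\partial_x\varphi-\frac{i}{2}|\varphi|^{2\sigma}\varphi=v$ holds identically in $(t,x)$. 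For this pair, Step~1 of the proof of Theorem~\ref{main result} gives $L\varphi=P(\varphi,\psi)=P(\varphi,v)$ and $L\psi=Q(\varphi,\psi)=Q(\varphi,v)$. Feeding $L\varphi=P(\varphi,v)$ into the displayed identity reproduces $Lv=G(\varphi,v)$, while $v=\psi$ forces $Lv=L\psi=Q(\varphi,v)$. Therefore $G(\varphi,v)=Q(\varphi,v)$ at $\varphi=\phi$; as $\phi$ was an arbitrary smooth profile and both sides depend continuously (indeed polynomially, up to the integral term) on the spatial jet of $\varphi$, the identity holds for every smooth $\varphi$.

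The only real obstacle is a matter of taste rather than substance. If one insists on a self-contained algebraic proof, the work is the bookkeeping: substitute $v=\partial_x\varphi-\frac{i}{2}|\varphi|^{2\sigma}\varphi$ into $P(\varphi,v)$, $\partial_xP(\varphi,v)$ and $\overline{P(\varphi,v)}$, and check that the $\partial_x^2\overline\varphi$, $|\partial_x\varphi|^2$, $(\partial_x\overline\varphi)^2$ and nonlocal contributions reassemble into $-i\sigma|\varphi|^{2(\sigma-1)}v^2\overline\varphi$ together with the integral term of $Q$. This is word for word the computation of $L\psi$ in Step~1 with $v$ in place of $\psi$, so no cancellation beyond those already verified there is needed; the conceptual argument above merely packages them.
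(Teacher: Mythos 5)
Your argument is correct in substance, but it proves the lemma by a genuinely different route than the paper. The paper's proof is a self-contained algebraic verification: it substitutes $v=\partial_x\varphi-\frac{i}{2}|\varphi|^{2\sigma}\varphi$ into the definition \eqref{eq of G(varphi,v)}, first checks that the nonlocal contributions reassemble into $-\sigma(\sigma-1)v\int_{-\infty}^x|\varphi|^{2(\sigma-2)}\Im(v^2\overline{\varphi}^2)\,dy$, and then grinds the remaining local terms down to $-i\sigma|\varphi|^{2(\sigma-1)}\overline{\varphi}v^2$; no reference to the evolution is needed, and the identity holds for any $\varphi$ for which the expressions make sense. You instead exploit three facts: (i) after inserting $v$, both $G(\varphi,v)$ and $Q(\varphi,v)$ depend only on the spatial jet of $\varphi$ (no $\partial_t\varphi$), so the claim is a pointwise identity in $\varphi$ at each fixed time; (ii) $G(\varphi,v)$ is, by its construction in Step 3, exactly the value of $Lv=\partial_xL\varphi-\frac{i}{2}L(|\varphi|^{2\sigma}\varphi)$ once $L\varphi=P(\varphi,v)$ is inserted; (iii) every smooth decaying profile $\varphi_0$ is realized at $t=0$ by the gauge transform of an actual solution of \eqref{gdnls}, for which \eqref{eqof psi} forces $\psi=v$ identically, so Step 1 gives $L\varphi=P(\varphi,v)$ and $Lv=L\psi=Q(\varphi,v)$; comparing the two expressions for $Lv$ yields the identity at $\varphi_0$. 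This is logically sound and non-circular, since neither the Step 1 computations nor the Leibniz expansion of $L(|\varphi|^{2\sigma}\varphi)$ uses the lemma; it is also conceptually illuminating, as it explains why the identity must hold rather than verifying that it does. What it costs is that a purely formal algebraic identity becomes dependent on the analytic theory of \eqref{gdnls}: you need solutions regular enough (roughly $H^3$, i.e.\ persistence of regularity beyond the $H^1$ theory cited from \cite{HaOz16}) for the Step 1 computations and for $L\psi$ to make pointwise sense, and you need the closing density/continuity remark to pass from smooth decaying profiles to the class of $\varphi$ to which the lemma is actually applied in Step 3 (where $\varphi=\tilde{\varphi}+h$ has only $H^1$-type regularity). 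These steps are standard but are asserted rather than proved; the paper's computation, while tedious, needs none of them.
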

\begin{proof}
We have
\begin{align*}
P(\varphi,v)&=i\sigma|\varphi|^{2(\sigma-1)}\varphi^2\overline{v}-\sigma(\sigma-1)\varphi\int_{-\infty}^x|\varphi|^{2(\sigma-2)}\Im(v^2\overline{\varphi}^2)\,dy,\\
Q(\varphi,v)&=-i\sigma|\varphi|^{2(\sigma-1)}v^2\overline{\varphi}-\sigma(\sigma-1)v\int_{-\infty}^x|\varphi|^{2(\sigma-2)}\Im(v^2\overline{\varphi}^2)\,dy\\
G(\varphi,v)&=\partial_xP(\varphi,v)-\frac{i}{2}(\sigma+1)|\varphi|^{2\sigma}P(\varphi,v)\\
&\quad+\frac{i}{2}\sigma|\varphi|^{2(\sigma-1)}\varphi^2\overline{P(\varphi,v)}-i\sigma |\varphi|^{2(\sigma-1)}\varphi^2\partial_x^2\overline{\varphi}\\
&\quad-\frac{i}{2}\left[(\sigma+1)\partial_x\varphi\partial_x(|\varphi|^{2\sigma})+\sigma(\sigma+1)|\partial_x\varphi|^2|\varphi|^{2(\sigma-1)}\varphi\right.\\
&\quad\left.+\sigma(\sigma-1)(\partial_x\overline{\varphi})^2|\varphi|^{2(\sigma-2)}\varphi^3\right]. 
\end{align*}
The term contains $\int_{-\infty}^x|\varphi|^{2(\sigma-2)}\Im(v^2\overline{\varphi}^2)\,dy$ in the expression of $G(\varphi,v)$ is the following.
\begin{align*}
&-\sigma(\sigma-1)\partial_x\varphi\int_{-\infty}^x|\varphi|^{2(\sigma-2)}\Im(v^2\overline{\varphi}^2)\,dy\\
&\quad-\frac{i}{2}(\sigma+1)|\varphi|^{2\sigma}(-1)\sigma(\sigma-1)\varphi\int_{-\infty}^x|\varphi|^{2(\sigma-2)}\Im(v^2\overline{\varphi}^2)\,dy\\
&\quad+\frac{i}{2}\sigma|\varphi|^{2(\sigma-1)}\varphi^2(-1)\sigma(\sigma-1)\overline{\varphi}\int_{-\infty}^x|\varphi|^{2(\sigma-2)}\Im(v^2\overline{\varphi}^2)\,dy\\
&=-\sigma(\sigma-1)\int_{-\infty}^x|\varphi|^{2(\sigma-2)}\Im(v^2\overline{\varphi}^2)\,dy\left(\partial_x\varphi-\frac{i}{2}(\sigma+1)|\varphi|^{2\sigma}\varphi+\frac{i}{2}\sigma|\varphi|^{2\sigma}\varphi\right)\\
&=-\sigma(\sigma-1)\int_{-\infty}^x|\varphi|^{2(\sigma-2)}\Im(v^2\overline{\varphi}^2)\,dy\left(\partial_x\varphi-\frac{i}{2}|\varphi|^{2\sigma}\varphi\right)\\
&=-\sigma(\sigma-1)v\int_{-\infty}^x|\varphi|^{2(\sigma-2)}\Im(v^2\overline{\varphi}^2)\,dy,
\end{align*}
which equals to the term contains $\int_{-\infty}^x|\varphi|^{2(\sigma-2)}\Im(v^2\overline{\varphi}^2)\,dy$ in the expression of $Q(\varphi,v)$. We only need to check the equality of the remaining terms. The remaining terms of $G(\varphi,v)$ is the following.
\begin{align}
&i\sigma\partial_x(|\varphi|^{2(\sigma-1)}\varphi^2\overline{v})-\sigma(\sigma-1)|\varphi|^{2(\sigma-2)}\varphi\Im(v^2\overline{\varphi}^2)\nonumber\\
&-\frac{i}{2}(\sigma+1)|\varphi|^{2\sigma}(i\sigma|\varphi|^{2(\sigma-1)}\varphi^2\overline{v})\nonumber\\
&\quad+\frac{i}{2}\sigma|\varphi|^{2(\sigma-1)}\varphi^2(-i\sigma|\varphi|^{2(\sigma-1)}\overline{\varphi}^2v) -i\sigma|\varphi|^{2(\sigma-1)}\varphi^2\partial_x^2\overline{\varphi} \label{term15}\\
&\quad-\frac{i}{2}\left[(\sigma+1)\partial_x\varphi\partial_x(|\varphi|^{2\sigma})+\sigma(\sigma+1)|\partial_x\varphi|^2|\varphi|^{2(\sigma-1)}\varphi\right.\nonumber\\
&\quad\left.+\sigma(\sigma-1)(\partial_x\overline{\varphi})^2|\varphi|^{2(\sigma-2)}\varphi^3\right].\label{term16}
\end{align}     
Noting that $\partial_x(|\varphi|^2)=2\Re(v\overline{\varphi})$ and $v=\partial_x\varphi-\frac{i}{2}|\varphi|^{2\sigma}\varphi$, we have 
\begin{align*}
&\text{ the term \eqref{term15}}\\
&= i\sigma\partial_x(|\varphi|^{2(\sigma-1)})\varphi^2\overline{v}+i\sigma|\varphi|^{2(\sigma-1)}2\varphi\partial_x\varphi\overline{v}+i\sigma|\varphi|^{2(\sigma-1)}\varphi^2\partial_x\overline{v}\\
&\quad-\sigma(\sigma-1)|\varphi|^{2(\sigma-2)}\varphi 2\Re(v\overline{\varphi})\Im(v\overline{\varphi})+\frac{1}{2}\sigma |\varphi|^{4\sigma-2}\varphi^2\overline{v}\\
&\quad+\sigma^2|\varphi|^{4\sigma-2}\varphi\Re(\varphi\overline{v})-i\sigma|\varphi|^{2(\sigma-1)}\varphi^2\partial_x^2\overline{\varphi}\\
&=2i\sigma(\sigma-1)|\varphi|^{2(\sigma-2)}\Re(v\overline{\varphi})\varphi^2\overline{v}+2i\sigma|\varphi|^{2(\sigma-1)}\varphi\partial_x\overline{v}+i\sigma|\varphi|^{2(\sigma-1)}\varphi^2\partial_x(\overline{v}-\overline{\partial_x\varphi})\\
&\quad-2\sigma(\sigma-1)|\varphi|^{2(\sigma-2)}\varphi\Re(v\overline{\varphi})\Im(v\overline{\varphi})+\frac{1}{2}\sigma|\varphi|^{4\sigma-2}\varphi^2\overline{v}+\sigma^2|\varphi|^{4\sigma-2}\varphi\Re(\varphi\overline{v})\\
&=2\sigma(\sigma-1)|\varphi|^{2(\sigma-2)}\Re(v\overline{\varphi})\varphi(i\varphi\overline{v}-\Im(v\overline{\varphi}))+2i\sigma|\varphi|^{2(\sigma-1)}\varphi\partial_x\overline{v}\\
&\quad+i\sigma|\varphi|^{2(\sigma-1)}\varphi^2\partial_x\left(\frac{i}{2}|\varphi|^{2\sigma}\overline{\varphi}\right)+\frac{1}{2}\sigma|\varphi|^{4\sigma-2}\varphi^2\overline{v}+\sigma^2|\varphi|^{4\sigma-2}\varphi\Re(\varphi\overline{v})\\
&=2i\sigma(\sigma-1)|\varphi|^{2(\sigma-2)}\varphi(\Re(v\overline{\varphi}))^2+2i\sigma|\varphi|^{2(\sigma-1)}\varphi\partial_x\varphi\overline{v}\\
&\quad-\frac{1}{2}\sigma|\varphi|^{2(\sigma-1)}\varphi^2(2\sigma|\varphi|^{2(\sigma-1)}\Re(v\overline{\varphi})+|\varphi|^{2\sigma}\partial_x\overline{\varphi})\\
&\quad+\frac{1}{2}\sigma|\varphi|^{4\sigma-2}\varphi^2\overline{v}+\sigma^2|\varphi|^{4\sigma-2}\varphi\Re(\varphi\overline{v})\\
&=2i\sigma(\sigma-1)|\varphi|^{2(\sigma-2)}\varphi(\Re(v\overline{\varphi}))^2+2i\sigma|\varphi|^{2(\sigma-1)}\varphi\partial_x\varphi\overline{v}\\
&\quad-\frac{1}{2}\sigma|\varphi|^{4\sigma-2}\varphi^2\partial_x\overline{\varphi}+\frac{1}{2}\sigma|\varphi|^{4\sigma-2}\varphi^2\overline{v}\\
&=2i\sigma(\sigma-1)|\varphi|^{2(\sigma-2)}\varphi(\Re(v\overline{\varphi}))^2+2i\sigma|\varphi|^{2(\sigma-1)}\varphi\partial_x\varphi\overline{v}+\frac{1}{2}\sigma|\varphi|^{4\sigma-2}\varphi^2(\overline{v}-\partial_x\overline{\varphi})\\
&=2i\sigma(\sigma-1)|\varphi|^{2(\sigma-2)}\varphi(\Re(v\overline{\varphi}))^2+2i\sigma|\varphi|^{2(\sigma-1)}\varphi\partial_x\varphi\overline{v}+\frac{i}{4}\sigma|\varphi|^{6\sigma}\varphi.
\end{align*}
Moreover, using $\Re(\partial_x\varphi\overline{\varphi})=\Re(v\overline{\varphi})$ we have
\begin{align*}
&\text{ the term \eqref{term16}}\\
&= \frac{-i}{2}\left[\sigma(\sigma+1)|\partial_x\varphi|^2|\varphi|^{2(\sigma-1)}\varphi+\sigma(\sigma+1)|\varphi|^{2(\sigma-1)}\partial_x\varphi(\partial_x\overline{\varphi}\varphi+\partial_x\varphi\overline{\varphi})\right.\\
&\quad\left.+\sigma(\sigma-1)(\partial\overline{\varphi})^2|\varphi|^{2(\sigma-2)}\varphi^3\right]\\
&=\frac{-i}{2}\left[2\sigma|\partial\varphi|^2|\varphi|^{2(\sigma-1)}\varphi +\sigma(\sigma-1)|\varphi|^{2(\sigma-2)}\partial_x\overline{\varphi}\varphi^2(\partial_x\varphi\overline{\varphi}+\partial_x\overline{\varphi}\varphi)\right.\\
&\quad\left.+2\sigma(\sigma+1)|\varphi|^{2(\sigma-1)}\partial_x\varphi\Re(v\overline{\varphi})\right]\\
&=\frac{-i}{2}\left[2\sigma|\partial\varphi|^2|\varphi|^{2(\sigma-1)}\varphi+2\sigma(\sigma-1)|\varphi|^{2(\sigma-2)}\partial_x\overline{\varphi}\varphi^2\Re(v\overline{\varphi})\right.\\
&\quad\left.+2\sigma(\sigma+1)|\varphi|^{2(\sigma-1)}\partial_x\varphi\Re(v\overline{\varphi})\right]\\
&=-i\left[\sigma|\partial\varphi|^2|\varphi|^{2(\sigma-1)}\varphi+\sigma(\sigma-1)|\varphi|^{2(\sigma-2)}\partial_x\overline{\varphi}\varphi^2\Re(v\overline{\varphi})\right.\\
&\quad\left.+\sigma(\sigma+1)|\varphi|^{2(\sigma-1)}\partial_x\varphi\Re(v\overline{\varphi})\right]\\
&=-i\left[\sigma|\partial\varphi|^2|\varphi|^{2(\sigma-1)}\varphi+\sigma(\sigma-1)|\varphi|^{2(\sigma-2)}\Re(v\overline{\varphi})\varphi(\partial_x\overline{\varphi}\varphi+\partial_x\varphi\overline{\varphi})\right.\\
&\quad\left.+2\sigma|\varphi|^{2(\sigma-1)}\partial_x\varphi\Re(v\overline{\varphi})\right]\\
&=-i\left[\sigma|\partial\varphi|^2|\varphi|^{2(\sigma-1)}\varphi+2\sigma(\sigma-1)|\varphi|^{2(\sigma-2)}(\Re(v\overline{\varphi}))^2\varphi\right]\\
&=-2i\sigma(\sigma-1)|\varphi|^{2(\sigma-2)}\varphi(\Re(v\overline{\varphi}))^2\\
&\quad-i\sigma|\partial_x\varphi|^2|\varphi|^{2(\sigma-1)}\varphi-2i\sigma|\varphi|^{2(\sigma-1)}\partial_x\varphi\Re(v\overline{\varphi}).
\end{align*}
Combining the above expressions we obtain
\begin{align*}
&\text{the remaining term of $G(\varphi,v)$}\\
&=2i\sigma|\varphi|^{2(\sigma-1)}\varphi\partial_x\varphi\overline{v}+\frac{i}{4}\sigma|\varphi|^{6\sigma}\varphi-i\sigma|\partial_x\varphi|^2|\varphi|^{2(\sigma-1)}\varphi-2i\sigma|\varphi|^{2(\sigma-1)}\partial_x\varphi\Re(v\overline{\varphi})\\
&=2i\sigma|\varphi|^{2(\sigma-1)}\partial_x\varphi(\varphi\overline{v}-\Re(v\overline{\varphi}))+\frac{i}{4}\sigma|\varphi|^{6\sigma}\varphi-i\sigma|\partial_x\varphi|^2|\varphi|^{2(\sigma-1)}\varphi\\
&=-2\sigma|\varphi|^{2(\sigma-1)}\partial_x\varphi\Im(\varphi\overline{v})+\frac{i}{4}\sigma|\varphi|^{6\sigma}\varphi-i\sigma|\partial_x\varphi|^2|\varphi|^{2(\sigma-1)}\varphi\\
&=-\sigma|\varphi|^{2(\sigma-1)}\partial_x\varphi(2\Im(\varphi\overline{v})+i\partial_x\overline{\varphi}\varphi)+\frac{i}{4}\sigma|\varphi|^{6\sigma}\varphi\\
&=-\sigma|\varphi|^{2(\sigma-1)}\partial_x\varphi(2\Im(\varphi\partial_x\overline{\varphi})+|\varphi|^{2\sigma+2}+i\Re(\varphi\partial_x\overline{\varphi})-\Im(\varphi\partial_x\overline{\varphi}))+\frac{i}{4}\sigma|\varphi|^{6\sigma}\varphi\\
&=-\sigma|\varphi|^{2(\sigma-1)}\partial_x\varphi(|\varphi|^{2\sigma+2}+i\overline{\varphi}\partial_x\varphi)+\frac{i}{4}\sigma|\varphi|^{6\sigma}\varphi\\
&=-i\sigma|\varphi|^{2(\sigma-1)}\overline{\varphi}(\partial_x\varphi)^2-\sigma|\varphi|^{4\sigma}\partial_x\varphi+\frac{i}{4}\sigma|\varphi|^{6\sigma}\varphi\\
&=-i\sigma|\varphi|^{2(\sigma-1)}\overline{\varphi}\left(v+\frac{i}{2}|\varphi|^{2\sigma}\varphi\right)^2-\sigma|\varphi|^{4\sigma}\left(v+\frac{i}{2}|\varphi|^{2\sigma}\varphi\right)+\frac{i}{4}\sigma|\varphi|^{6\sigma}\varphi\\
&=-i\sigma|\varphi|^{2(\sigma-1)}\overline{\varphi}v^2.
\end{align*} 
This is exactly the remaining terms of $Q(\varphi,v)$. Thus, $G(\varphi,v)=Q(\varphi,v)$. 
\end{proof}

\subsection{Existence of a solution of the system}
In this section, using similar arguments as in \cite{CoDoTs15,CoTs14}, we prove the existence of a solution of \eqref{system new}. For convenience, we recall the equation:
\begin{equation}
\label{duhamel form of system}
\eta(t)=i\int_t^{\infty}S(t-s)[f(W+\eta)-f(W)+H](s)\,ds,
\end{equation}
where
\begin{align*}
W&=(h,k),\\
H&=e^{-\lambda t}(m,n),\\
f(\varphi,\psi)&=(P(\varphi,\psi),Q(\varphi,\psi)).
\end{align*}
We have the following lemma.
\begin{lemma}\label{existence slution of system}
Let $H=H(t,x):[0,\infty)\times \R \rightarrow \C^2$, $W=W(t,x):[0,\infty)\times \R \rightarrow \C^2$ be given vector functions which satisfy for some $C_1>0$, $C_2>0$, $\lambda>0$, $T_0 \geq 0$:
\begin{align}
\norm{W(t)}_{L^{\infty}\times L^{\infty}}+e^{\lambda t}\norm{H(t)}_{L^2 \times L^2} &\leq C_1, \quad \forall t \geq T_0, \label{estimateWHinL2normb1}\\
\norm{\partial W(t)}_{L^2 \times L^2}+\norm{\partial W(t)}_{L^{\infty} \times L^{\infty}}+e^{\lambda t}\norm{\partial H(t)}_{L^2 \times L^2} &\leq C_2, \quad \forall t \geq T_0. \label{estimateWHinderivativenormb1}
\end{align} 
Consider equation \eqref{duhamel form of system}. There exists a constant $\lambda_{*}$ independent of $C_2$ such that if $\lambda \geq \lambda_{*}$ then there exists a unique solution $\eta$ of \eqref{duhamel form of system} on $[T_0,\infty) \times \R$ satisfying
\[
e^{\lambda t}\norm{\eta}_{S([t,\infty)) \times S([t,\infty))}+e^{\lambda t}\norm{\partial \eta}_{S([t,\infty)) \times S([t,\infty))} \leq 1, \quad \forall t \geq T_0.
\]
\end{lemma}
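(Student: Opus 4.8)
The plan is to solve the integral equation \eqref{duhamel form of system} by a contraction argument in the complete metric space
\[
X=\left\{\eta:\ \norm{\eta}_X:=\sup_{t\geq T_0}e^{\lambda t}\left(\norm{\eta}_{S([t,\infty))\times S([t,\infty))}+\norm{\partial_x\eta}_{S([t,\infty))\times S([t,\infty))}\right)\leq 1\right\},
\]
on which we define the map
\[
\Phi(\eta)(t)=i\int_t^{\infty}S(t-s)\left[f(W+\eta)-f(W)+H\right](s)\,ds.
\]
The only analytic input is the inhomogeneous Strichartz estimate, which gives $\norm{\Phi(\eta)}_{S([t,\infty))}\lesssim\norm{f(W+\eta)-f(W)+H}_{N([t,\infty))}$ and, after commuting $\partial_x$ through the Duhamel integral, the same bound for $\partial_x\Phi(\eta)$. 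First I would show that $\Phi$ maps the unit ball of $X$ into itself and is a contraction there; the fixed point is then the unique solution, and the prescribed bound is exactly the statement that $\eta$ lies in the unit ball.

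For the source term I would use that $(\infty,2)$ is admissible, so that $\norm{\cdot}_{N([t,\infty))}\leq\norm{\cdot}_{L^1_tL^2_x}$. From \eqref{estimateWHinL2normb1}--\eqref{estimateWHinderivativenormb1} one has $\norm{H(s)}_{L^2\times L^2}\leq C_1e^{-\lambda s}$ and $\norm{\partial_x H(s)}_{L^2\times L^2}\leq C_2 e^{-\lambda s}$, so integrating in time produces $e^{\lambda t}\norm{H}_{N([t,\infty))}\lesssim C_1/\lambda$ and $e^{\lambda t}\norm{\partial_x H}_{N([t,\infty))}\lesssim C_2/\lambda$; the time integration is what converts the exponential weight into the decisive factor $\lambda^{-1}$.

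The core is the nonlinear difference. Writing $W+\eta=(\varphi,\psi)$, the fundamental theorem of calculus expresses $f(W+\eta)-f(W)$ as $\eta$ times a factor homogeneous of degree $2\sigma$ in $(\abs{W},\abs{\eta})$, so for the local pieces of $P,Q$ in \eqref{eq of P}--\eqref{eq of Q} one has $\abs{f(W+\eta)-f(W)}\lesssim\abs{\eta}\,(\abs{W}+\abs{\eta})^{2\sigma}$ pointwise. I would place this in $L^1_tL^2_x$: at each time the factor $\eta$ contributes $\norm{\eta(s)}_{L^2}\leq e^{-\lambda s}\norm{\eta}_X$, while the degree-$2\sigma$ factor is bounded in $L^\infty_x$ by $(C_1+\norm{\eta(s)}_{L^\infty})^{2\sigma}\lesssim(C_1+1)^{2\sigma}$, using the one-dimensional bound $\norm{\eta(s)}_{L^\infty}\lesssim\norm{\eta(s)}_{L^2}^{1/2}\norm{\partial_x\eta(s)}_{L^2}^{1/2}\lesssim e^{-\lambda s}\norm{\eta}_X$ coming from the $(\infty,2)$ endpoint of the Strichartz norm. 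The nonlocal pieces $\varphi\int_{-\infty}^x\abs{\varphi}^{2(\sigma-2)}\Im(\psi^2\overline{\varphi}^2)\,dy$ are handled by bounding the inner integral in $L^\infty_x$ through an $L^1_x$ bound on the integrand; here the difference of the singular prefactors $\abs{\varphi}^{2(\sigma-2)}-\abs{h}^{2(\sigma-2)}$ is estimated by \eqref{eq 11111}, and the restriction $\sigma\geq\frac52$ is exactly what makes the leading term $\abs{\varphi-h}^{2(\sigma-2)}$ at least linear in $\eta$, so that it too carries a genuine $e^{-\lambda s}$ gain and is absorbed. The same computation applied to $f(W+\eta_1)-f(W+\eta_2)$ yields the contraction estimate.

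The hard part will be the claimed independence of $\lambda_*$ from $C_2$. The constant $C_2$ appears only in the derivative estimate, and only when $\partial_x$ falls on a factor of $W$ (producing $\partial_x W$, bounded by $C_2$) or on $H$; in every such term $\partial_x W$ is multiplied by a surviving factor of $\eta$, and $\partial_x H$ carries its own $e^{-\lambda s}$, so after integrating against $e^{-\lambda s}\,ds$ these contributions always come with a compensating $\lambda^{-1}$. The clean way to exploit this is to run the contraction in the weaker, derivative-free norm $\sup_{t\geq T_0}e^{\lambda t}\norm{\cdot}_{S([t,\infty))\times S([t,\infty))}$, whose contraction constant involves only $C_1$ and $\sigma$: this is the origin of a threshold $\lambda_*$ that does not see $C_2$. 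Since the unit ball of $X$ is closed for this weaker norm, Banach's theorem still applies there, and the derivative part of the bound defining $X$ is then recovered as an a priori estimate along the iteration, the $C_2$-terms staying below the prescribed size precisely because of the $\lambda^{-1}$ gains. I expect the most delicate bookkeeping to be verifying that the nonlocal integrals and the singular-prefactor differences close with powers strictly larger than one in $\eta$ uniformly in the iteration, which is what \eqref{eq 11111} together with $\sigma\geq\frac52$ guarantee.
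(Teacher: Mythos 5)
Your core construction coincides with the paper's own proof: the same fixed-point map $\Phi\eta(t)=i\int_t^{\infty}S(t-s)\left[f(W+\eta)-f(W)+H\right](s)\,ds$ on the same unit ball of the exponentially weighted Strichartz space, the same reduction $\norm{\cdot}_{N([t,\infty))}\leq\norm{\cdot}_{L^1_{\tau}L^2_x}$, the same pointwise difference estimates in which \eqref{eq 11111} and $\sigma\geq\frac{5}{2}$ control the singular prefactors $|\cdot|^{2(\sigma-2)}$, and the same treatment of the nonlocal terms by bounding the inner integral in $L^{\infty}_x$ through an $L^1_x$ bound on the integrand. Up to bookkeeping, this is exactly the paper's Step 1 and Step 2.

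The divergence is the independence of $\lambda_{*}$ from $C_2$, which you rightly single out as the hard part, but your mechanism for it has a genuine gap. Running the contraction in the derivative-free norm does make the contraction constant depend only on $C_1$ (the expressions $P,Q$ contain no derivatives of $\eta$, and $L^{\infty}_tL^2_x$, $L^4_tL^{\infty}_x$ are admissible norms), and that part of your plan is sound. However, Banach's theorem on the ball $B$ with the weaker metric still requires $\Phi(B)\subseteq B$, and the conclusion of the lemma itself demands $e^{\lambda t}\norm{\partial_x\eta}_{S([t,\infty))\times S([t,\infty))}\leq 1$. When $\partial_x$ falls on $W$ the resulting term is, schematically, $\left[f'(W+\eta)-f'(W)\right]\partial_xW$, of size $C(C_1)\,|\eta|\,|\partial_xW|$; its $L^1_{\tau}L^2_x$ norm is of order $C(C_1)C_2\lambda^{-1}e^{-\lambda t}$, and $\partial_xH$ likewise contributes $C_2\lambda^{-1}e^{-\lambda t}$. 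The ``$\lambda^{-1}$ gain'' therefore produces $C_2/\lambda$, not something $C_2$-free, and it stays below the prescribed size only when $\lambda\gtrsim C_2$. What your argument actually proves is existence and uniqueness in the weak norm for $\lambda\geq\lambda_{*}(C_1)$, together with the a posteriori bound $e^{\lambda t}\norm{\partial_x\eta}_{S}\lesssim C(C_1)(1+C_2)/\lambda$, which is weaker than the stated conclusion. You should know that the paper's proof does not do better: in its bounds for \eqref{eq 101}, \eqref{1001} and \eqref{1002} the factors $\norm{\partial W}_{L^{\infty}\times L^{\infty}}\leq C_2$ and $\norm{\partial_xw_1}_{L^{\infty}_tL^2_x}\leq C_2$ are silently absorbed into ``$\lesssim$'' before asserting $\frac{1}{\lambda}e^{-\lambda t}<\frac{1}{10}e^{-\lambda t}$, which again implicitly needs $\lambda\gtrsim C_2$. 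So your proposal matches the published argument in substance and is even more honest about the contraction step, but the $C_2$-independence asserted in the statement is established by neither; repairing it would require either weakening the derivative part of the conclusion to $\lesssim(1+C_2)/\lambda$, or a genuinely new ingredient such as a smoothing estimate for $\int_t^{\infty}S(t-s)e^{-\lambda s}g\,ds$ (in Fourier, the multiplier $\xi/(\lambda-i\xi^2)$ is bounded by $\lambda^{-1/2}$) that trades the derivative on $W$ or $H$ for a factor $\lambda^{-1/2}$ without invoking $C_2$.
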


\begin{proof}
We rewrite \eqref{duhamel form of system} by $\eta=\Phi\eta$. We show that, for $\lambda$ large enough, $\Phi$ is a contraction map in the following ball
\[
B=\left\{\eta:\norm{\eta}_X:=e^{\lambda t}\norm{\eta}_{S([t,\infty))\times S([t,\infty))}+e^{\lambda t}\norm{\partial_x\eta}_{S([t,\infty))\times S([t,\infty))}\leq 1\right\}.
\]
We will use condition $\lambda \gg 1$ in the proof without specifying it. \\
\textbf{Step 1. Proof $\Phi$ maps $B$ into $B$}\\
Let $t \geq T_0$, $\eta=(\eta_1,\eta_2) \in B$, $W=(w_1,w_2)$ and $H=(h_1,h_2)$. By Strichartz estimates, we have
\begin{align}
\norm{\Phi\eta}_{S([t,\infty))\times S([t,\infty))}&\lesssim \norm{f(W+\eta)-f(W)}_{N([t,\infty))\times N([t,\infty))},\label{eq195}\\
&\quad +\norm{H}_{L^1_{\tau}L^2_x([t,\infty))\times L^1_{\tau}L^2_x([t,\infty))}.\label{eq295}
\end{align}
For \eqref{eq295}, using \eqref{estimateWHinL2normb1}, we have
\begin{align}\label{eq395}
\norm{H}_{L^1_{\tau}L^2_x([t,\infty))\times L^1_{\tau}L^2_x([t,\infty))}&=\norm{h_1}_{L^1_{\tau}L^2_x([t,\infty))}+\norm{h_2}_{L^1_{\tau}L^2_x([t,\infty))}\nonumber\\
&\lesssim \int_t^{\infty}e^{-\lambda \tau}\,d\tau\leq \frac{1}{\lambda}e^{-\lambda t}<\frac{1}{10}e^{-\lambda t}.
\end{align}
For \eqref{eq195}, we have
\begin{align}
&|P(W+\eta)-P(W)|\nonumber\\
&=|P(w_1+\eta_1,w_2+\eta_2)-P(w_1,w_2)|\nonumber\\
&\lesssim \left||w_1+\eta_1|^{2\sigma-1)}(w_1+\eta_1)^2\overline{w_2+\eta_2}-|w_1|^{2(\sigma-1)}w_1^2\overline{w_2}\right|\label{eq4.1}\\
&+\left|(w_1+\eta_1)\int_{-\infty}^x|w_1+\eta_1|^{2(\sigma-2)}\Im((w_2+\eta_2)^2(\overline{w_1}+\overline{\eta_1})^2)\right.\nonumber\\
&\quad\left.-w_1\int_{-\infty}^x|w_1|^{2(\sigma-2)}\Im(w_2^2\overline{\eta_1}^2)\right|.\label{eq4.2}
\end{align}
Using the assumption $\sigma\geq \frac{5}{2}$ and the inequality \eqref{eq 11111} we have
\begin{align*}
&\text{ the term \eqref{eq4.1} }\\
&\lesssim \left|||w_1+\eta_1|^{2(\sigma-1)}-|w_1|^{2(\sigma-1)}||w_1+\eta_1|^2|w_2+\eta_2|\right|\\
&\quad+\left||w_1|^{2(\sigma-1)}|(w_1+\eta_1)^2-w_1^2||w_2+\eta_2|\right|+\left||w_1|^{2(\sigma-1)}|w_1|^2|\eta_2|\right|\\
&\lesssim (|\eta_1|^{2(\sigma-1)}+|\eta_1||w_1|^{2(\sigma-1)-1})(|W|+|\eta|)^3\\
&\quad+|w_1|^{2(\sigma-1)}(|w_1||\eta_1|+|\eta_1|^2)|w_2+\eta_2|+|w_1|^{2\sigma}|\eta_2|\\
&\lesssim (|\eta|^{2(\sigma-1)}+|\eta||W|^{2(\sigma-1)-1})(|W|^3+|\eta|^3)\\
&\quad+|W|^{2(\sigma-1)}(|W||\eta|+|\eta|^2)(|W|+|\eta|)+|W|^{2\sigma}|\eta|\\
&\lesssim |\eta|(|\eta|^{2\sigma-3}+|W|^{2\sigma-3})(|\eta|^3+|W|^3)+|\eta||W|^{2(\sigma-1)}(|W|^2+|\eta|^2)+|W|^{2\sigma}|\eta|\\
&\lesssim |\eta|(|\eta|^{2\sigma}+|W|^{2\sigma})+|\eta||W|^{2\sigma}+|\eta|^3|W|^{2(\sigma-1)}+|W|^{2\sigma}|\eta|\\
&\lesssim |\eta|^{2\sigma+1}+|\eta||W|^{2\sigma}.
\end{align*}
Moreover,
\begin{align*}
&\text{ the term \eqref{eq4.2}}\\
&\lesssim |\eta_1|\int_{-\infty}^x|w_1+\eta_1|^{2(\sigma-2)}|w_2+\eta_2|^2|w_1+\eta_1|^2\,dy\\
&\quad +|w_1|\int_{-\infty}^x(|w_1+\eta_1|^{2(\sigma-2)}-|w_1|^{2(\sigma-2)})|w_2+\eta_2|^2|w_1+\eta_1|^2\,dy\\
&\quad +|w_1|\int_{-\infty}^x|w_1|^{2(\sigma-2)}|\Im((w_2+\eta_2)^2-w_2^2)(\overline{w_1}+\overline{\eta_1})^2|\,dy\\
&\quad +|w_1|\int_{-\infty}^x|w_1|^{2(\sigma-2)}|\Im(w_2^2((\overline{w_1}+\eta_1)^2-\overline{\eta_1}^2))|\,dy\\
&\lesssim |\eta|\int_{-\infty}^x|W|^{2\sigma}+|\eta|^{2\sigma}\, dy+|W|\int_{-\infty}^x(|\eta_1|^{2(\sigma-2)}+|\eta_1||w_1|^{2\sigma-5})(|W|^4+|\eta|^4)\,dy\\
&\quad+|W|\int_{-\infty}^x|W|^{2(\sigma-2)}(|\eta_2|^2+|w_2||\eta_2|)(|W|^2+|\eta|^2)\,dy \\
&\quad+|W|\int_{-\infty}^x|W|^{2(\sigma-2)}|w_2|^2(|\eta_1|^2+|\eta_1||w_1|)\,dy\\
&\lesssim |\eta|\int_{-\infty}^x|W|^{2\sigma}+|\eta|^{2\sigma}\, dy+|W|\int_{-\infty}^x|\eta|(|W|^{2\sigma}+|\eta|^{2\sigma})\,dy\\
&+|W|\int_{-\infty}^x|W|^{2(\sigma-2)}|\eta|(|W|^3+|\eta|^3)\,dy+|W|\int_{-\infty}^x|W|^{2(\sigma-2)}|W|^2|\eta|(|W|+|\eta|)\,dy\\
&\lesssim |\eta|\int_{-\infty}^x|W|^{2\sigma}+|\eta|^{2\sigma}\,dy+|W|\int_{-\infty}^x|\eta||W|^{2\sigma-1}+|\eta|^{2\sigma}\,dy.
\end{align*}
Thus, we obtain
\begin{align*}
&|P(W+\eta)-P(W)|\\
&\lesssim |\eta|^{2\sigma+1}+|\eta||W|^{2\sigma}+|\eta|\int_{-\infty}^x|W|^{2\sigma}+|\eta|^{2\sigma}\,dy+|W|\int_{-\infty}^x|\eta||W|^{2\sigma-1}+|\eta|^{2\sigma}\,dy.
\end{align*}
Similarly,
\begin{align*}
&|Q(W+\eta)-Q(W)|\\
&\lesssim |\eta|^{2\sigma+1}+|\eta||W|^{2\sigma}+|\eta|\int_{-\infty}^x|W|^{2\sigma}+|\eta|^{2\sigma}\,dy+|W|\int_{-\infty}^x|\eta||W|^{2\sigma-1}+|\eta|^{2\sigma}\,dy.
\end{align*}
Hence, using $\sigma \geq \frac{5}{2}$, we have:
\begin{align*}
&\norm{f(W+\eta)-f(W)}_{N([t,\infty))\times N([t,\infty))}\\
&\lesssim \norm{P(W+\eta)-P(W)}_{L^1_{\tau}L^2_x([t,\infty))}+\norm{Q(W+\eta)-Q(W)}_{L^1_{\tau}L^2_x([t,\infty))}\\
&\lesssim \norm{|\eta|^{2\sigma+1}}_{L^1_{\tau}L^2_x([t,\infty))}+\norm{|\eta|\int_{-\infty}^x|W|^{2\sigma}+|\eta|^{2\sigma}\,dy}_{L^1_{\tau}L^2_x([t,\infty))}\\
&\quad+\norm{|W|\int_{-\infty}^x|\eta||W|^{2\sigma-1}+|\eta|^{2\sigma}\,dy}_{L^1_{\tau}L^2_x([t,\infty))}\\
&\lesssim \norm{|\eta|}_{L^{\infty}L^2_x([t,\infty))} \norm{|\eta|}^4_{L^4_{\tau}L^{\infty}_x([t,\infty))}\\
&\quad+\norm{|\eta|}_{L^1_{\tau}L^2_x([t,\infty))}\left\lVert\int_{-\infty}^x|W|^{2\sigma}+|\eta|^{2\sigma}\,dy\right\rVert_{L^{\infty}_{\tau}L^{\infty}_x([t,\infty))}\\
&\quad +\norm{|W|}_{L^{\infty}_{\tau}L^2_x([t,\infty))}\norm{\int_{-\infty}^x|\eta||W|^{2\sigma-1}+|\eta|^{2\sigma}\,dy}_{L^1_{\tau}L^{\infty}_x([t,\infty))}\\
&\lesssim e^{-5\lambda t}+ \norm{|\eta|}_{L^1_{\tau}L^2_x([t,\infty))}\norm{|W|^{2\sigma}+|\eta|^{2\sigma}}_{L^{\infty}_{\tau}L^1_x}\\
&\quad+\norm{W}_{L^{\infty}_tL^2_x}\norm{\eta}_{L^1_{\tau}L^2_x([t,\infty))}\norm{|W|^{2\sigma-1}+|\eta|^{2\sigma-1}}_{L^{\infty}_{\tau}L^2_x([t,\infty))}\\
&\lesssim e^{-5\lambda t}+ \norm{|\eta|}_{L^1_{\tau}L^2_x([t,\infty))}=e^{-5\lambda t}+\int_{t}^{\infty}e^{-\lambda \tau}d\tau\\
&\lesssim e^{-5\lambda t}+\frac{1}{\lambda}e^{-\lambda t}<\frac{1}{10}e^{-\lambda t},
\end{align*}
Combining with \eqref{eq395} and \eqref{eq195}, \eqref{eq295} we obtain
\begin{align}\label{estimate 1}
\norm{\Phi\eta}_{S([t,\infty))\times S([t,\infty))} <\frac{1}{5}e^{-\lambda t}.
\end{align}
We have
\begin{align}
\norm{\partial_x\Phi\eta}_{S([t,\infty))\times S([t,\infty))}&\lesssim \norm{\partial_x(f(W+\eta)-f(W))}_{N([t,\infty))\times N([t,\infty))}\label{eq 100}\\
&\quad +\norm{\partial_xH}_{L^1_{\tau}L^2_x([t,\infty))\times L^1_{\tau}L^2_x([t,\infty))} \label{eq 101}.
\end{align}
For \eqref{eq 101}, using \eqref{estimateWHinderivativenormb1} we have
\begin{equation}
\label{eq 102}
\norm{\partial_xH}_{L^1_{\tau}L^2_x([t,\infty))\times L^1_{\tau}L^2_x([t,\infty))}\lesssim \int_t^{\infty} e^{-\lambda \tau}\,d\tau=\frac{1}{\lambda}e^{-\lambda t}<\frac{1}{10} e^{-\lambda t},
\end{equation}
For \eqref{eq 100}, we have
\begin{align*}
\norm{\partial_x(f(W+\eta)-f(W))}_{N([t,\infty))\times N([t,\infty))}&=\norm{\partial_x(P(W+\eta)-P(W))}_{N([t,\infty))}\\
&+\norm{\partial_x(Q(W+\eta)-Q(W))}_{N([t,\infty))}.
\end{align*}
Furthermore, 
\begin{align}
&|\partial_x(P(W+\eta)-P(W))|\nonumber\\
&\lesssim |\partial_x(|w_1+\eta_1|^{2(\sigma-1)}(w_1+\eta_1)^2(\overline{w}_2+\overline{\eta}_2)-|w_1|^{2(\sigma-1)}w_1^2\overline{w}_2)|\label{1001}\\
&\quad +\left|\partial_x(w_1+\eta_1)\int_{-\infty}^x|w_1+\eta_1|^{2(\sigma-2)}\Im((w_2+\eta_2)^2(\overline{w}_1+\overline{\eta}_1)^2)\,dy\right.\nonumber\\
&\quad\left.-\partial_xw_1\int_{-\infty}^x|w_1|^{2(\sigma-2)}\Im(w_2^2\overline{w}_1^2)\,dy\right|\label{1002}\\
& \quad+\left|(w_1+\eta_1)|w_1+\eta_1|^{2(\sigma-2)}\Im((w_2+\eta_2)^2(\overline{w}_1+\overline{\eta}_1)^2)\right.\nonumber\\
&\quad\left.-w_1|w_1|^{2(\sigma-2)}\Im(w_2^2\overline{w}_1)\right|.\label{1003}
\end{align}
For \eqref{1001}, we have
\begin{align*}
&\text{ the term \eqref{1001}}\\
&\lesssim (|\eta|+|\eta|^{2\sigma}+|\partial_x\eta|)(|W|+|W|^{2\sigma}+|\eta|+|\eta|^{2\sigma}+|\partial_x\eta|)
\end{align*} 
Thus,
\begin{align*}
\norm{\text{the term \eqref{1001}}}_{L^1_{\tau}L^2_x([t,\infty))}&\lesssim \norm{|\eta|+|\partial\eta|}_{L^1_{\tau}L^2_x}\lesssim \frac{1}{\lambda}e^{-\lambda t}<\frac{1}{10}e^{-\lambda t}.
\end{align*}
For \eqref{1002}, using the inequality \eqref{eq 11111}, we have
\begin{align*}
&\norm{\text{ the term \eqref{1002}}}_{L^1_{\tau}L^2_x([t,\infty))}\\
&\lesssim \norm{\partial\eta_1}_{L^1_{\tau}L^2_x([t,\infty))}\norm{\int_{-\infty}^x|w_1+\eta_1|^{2(\sigma-2)}\Im((w_2+\eta_2)^2(\overline{w}_1+\overline{\eta}_1)^2)\,dy}_{L^{\infty}_tL^{\infty}_x}\\
&\quad+\norm{\partial_xw_1}_{L^{\infty}_tL^2_x}\times\nonumber\\&\quad\times\left\lVert\int_{-\infty}^x(|w_1+\eta_1|^{2(\sigma-2)}\Im((w_2+\eta_2)^2(\overline{w}_1+\overline{\eta}_1)^2)-|w_1|^{2(\sigma-2)}\Im(w_2^2\overline{w}_1^2))\, dy\right\rVert_{L^{1_{\tau}L^{\infty}_x}}\\
&\lesssim \norm{\partial\eta_1}_{L^1_{\tau}L^2_x([t,\infty))}\norm{|w_1+\eta_1|^{2(\sigma-2)}\Im((w_2+\eta_2)^2(\overline{w}_1+\overline{\eta}_1)^2)}_{L^{\infty}_tL^1_x}\\
&\quad+\norm{|w_1+\eta_1|^{2(\sigma-2)}\Im((w_2+\eta_2)^2(\overline{w}_1+\overline{\eta}_1)^2)-|w_1|^{2(\sigma-2)}\Im(w_2^2\overline{w}_1^2)}_{L^1_{\tau}L^1_x}\\
&\lesssim \norm{\partial\eta_1}_{L^1_{\tau}L^2_x([t,\infty))}+\norm{|\eta|}_{L^1_{\tau}L^2_x([t,\infty))}\leq \int_t^{\infty}e^{-\lambda \tau}\,d\tau\lesssim \frac{1}{\lambda}e^{-\lambda t}<\frac{1}{10}e^{-\lambda t},
\end{align*}
For \eqref{1003}, using the inequality \eqref{eq 11111}, we have
\begin{align*}
&\norm{\text{the term \eqref{1003}}}_{L^1_{\tau}L^2_x([t,\infty))}\\
&\lesssim \norm{|\eta|}_{L^1_{\tau}L^2_x([t,\infty))}\\
&\leq \int_t^{\infty}e^{-\lambda \tau}\,d\tau\lesssim \frac{1}{\lambda}e^{-\lambda t}<\frac{1}{10}e^{-\lambda t},
\end{align*}
Combining the above estimates, we obtain 
\begin{align}
&\norm{\partial_x(P(W+\eta)-P(W))}_{N([t,\infty))} \nonumber\\
&\leq \norm{\partial_x(P(W+\eta)-P(W))}_{L^1_{\tau}L^2_x([t,\infty))}\leq \frac{3}{10}e^{-\lambda t}, \label{eq 19}
\end{align}
Similarly, 
\begin{align}\label{eq 20}
\norm{\partial_x(Q(W+\eta)-Q(W))}_{N([t,\infty))}&\leq \frac{3}{10}e^{-\lambda t},
\end{align}
Combining the estimates \eqref{eq 100}, \eqref{eq 101}, \eqref{eq 102}, \eqref{eq 19} and \eqref{eq 20}, we have
\begin{equation}
\label{estimate 2}
\norm{\partial_x\Phi\eta}_{S([t,\infty))\times S([t,\infty))}\leq \frac{7}{10}e^{-\lambda t}.
\end{equation} 
Combining \eqref{estimate 1} with \eqref{estimate 2}, we obtain 
\begin{equation}\label{eq final step 1}
\norm{\Phi\eta}_{S([t,\infty))\times S([t,\infty))}+\norm{\partial_x\Phi\eta}_{S([t,\infty))\times S([t,\infty))}\leq \frac{9}{10}e^{-\lambda t},
\end{equation}
Thus, for $\lambda$ large enough
\[
\norm{\Phi\eta}_X <1.
\]
This implies that $\Phi$ maps $B$ into $B$.\\
\textbf{Step 2. $\Phi$ is a contraction map on B}\\
By using \eqref{estimateWHinL2normb1}, \eqref{estimateWHinderivativenormb1} and a similar estimate of \eqref{eq final step 1}, we can show that, for any $\eta\in B$ and $\kappa \in B$ we have
\[
\norm{\Phi\eta-\Phi\kappa}_X\leq \frac{1}{2}\norm{\eta-\kappa}_X.
\]
for $\lambda$ large enough. From Banach fixed point theorem, there exists a unique solution in $B$ of \eqref{duhamel form of system} and thus a solution of \eqref{system new}. This completes the proof of Lemma \ref{existence slution of system}.
\end{proof}

\section*{Acknowledgement}The author is supported by scholarship of MESR for his phD. This work is supported by the ANR LabEx CIMI (grant ANR-11-LABX-0040) within the French State Programme “Investissements d’Avenir.

%\bibliographystyle{abbrv}
%\bibliography{bibliotheque15}

\bibliographystyle{abbrv}
\bibliography{bibliotheforpaper4}

\end{document}